\pdfoutput=1
\PassOptionsToPackage{hidelinks,linktocpage,colorlinks=true,allcolors=blue,linktoc=all}{hyperref}
\PassOptionsToPackage{capitalise,nameinlink,noabbrev}{cleveref}

\newif\ifusecolttemplate
\IfFileExists{includes/colt2025.cls}{%
  \IfFileExists{algorithm2e.sty}{\usecolttemplatetrue}{\usecolttemplatefalse}%
}{\usecolttemplatefalse}
\ifusecolttemplate
  \documentclass[final,cleveref]{includes/colt2025}
  \jmlrpages{} %
  \crefname{equation}{}{}
\else
  \documentclass[11pt]{article}
\fi

\usepackage{amsmath,amssymb,mathtools,mathrsfs}
\ifusecolttemplate\else
  \usepackage{amsthm}
\fi
\usepackage{xparse}
\usepackage{xargs}
\usepackage{xcolor}
\usepackage{tikz}
\usepackage{enumitem}
\usepackage{booktabs}
\usepackage{aliascnt}
\usepackage{url}
\ifusecolttemplate\else
  \usepackage[margin=1.05in]{geometry}
  \usepackage[colorlinks=true,linkcolor=blue,citecolor=blue,urlcolor=blue]{hyperref}
  \usepackage[capitalise,nameinlink,noabbrev]{cleveref}
\fi

\usepackage{algorithm}
\usepackage{algcompatible}
\algnewcommand{\lst}{\texttt{lst}}
\algnewcommand{\slst}{\texttt{slst}}
\algnewcommand{\SEND}{\textbf{send}}

\newsavebox{\algleft}
\newsavebox{\algright}

\makeatletter
\newcounter{algorithmicH}
\let\oldalgorithmic\algorithmic
\renewcommand{\algorithmic}{%
  \stepcounter{algorithmicH}
  \oldalgorithmic}
\renewcommand{\theHALG@line}{ALG@line.\thealgorithmicH.\arabic{ALG@line}}
\makeatother

\makeatletter
\newenvironment{breakablealgorithm}
  {
   \begin{center}
     \refstepcounter{algorithm}
     \hrule height.8pt depth0pt \kern2pt
     \renewcommand{\caption}[2][\relax]{
       {\raggedright\textbf{\ALG@name~\thealgorithm} ##2\par}%
       \ifx\relax##1\relax 
         \addcontentsline{loa}{algorithm}{\protect\numberline{\thealgorithm}##2}%
       \else 
         \addcontentsline{loa}{algorithm}{\protect\numberline{\thealgorithm}##1}%
       \fi
       \kern2pt\hrule\kern2pt
     }
  }{
     \kern2pt\hrule\relax
   \end{center}
  }

\makeatother

\usepackage[natbib, backend=biber, maxcitenames=3, minalphanames=3, maxbibnames=99, style=alphabetic, hyperref, backref, useprefix=true, uniquename=false, doi=false,url=false,eprint=false]{biblatex} %

\usepackage{csquotes}               %
\bibliography{refs}        %

\DeclareCiteCommand{\cite}
  {\usebibmacro{prenote}}
  {\usebibmacro{citeindex}%
   \printtext[bibhyperref]{\usebibmacro{cite}}}
  {\multicitedelim}
  {\usebibmacro{postnote}}

\DeclareCiteCommand*{\cite}
  {\usebibmacro{prenote}}
  {\usebibmacro{citeindex}%
   \printtext[bibhyperref]{\usebibmacro{citeyear}}}
  {\multicitedelim}
  {\usebibmacro{postnote}}

\DeclareCiteCommand{\parencite}[\mkbibparens]
  {\usebibmacro{prenote}}
  {\usebibmacro{citeindex}%
    \printtext[bibhyperref]{\usebibmacro{cite}}}
  {\multicitedelim}
  {\usebibmacro{postnote}}

\DeclareCiteCommand*{\parencite}[\mkbibparens]
  {\usebibmacro{prenote}}
  {\usebibmacro{citeindex}%
    \printtext[bibhyperref]{\usebibmacro{citeyear}}}
  {\multicitedelim}
  {\usebibmacro{postnote}}

\DeclareCiteCommand{\citeauthor}
  {\usebibmacro{prenote}}
  {\ifciteindex
     {\indexnames{labelname}}
     {}%
   \printtext[bibhyperref]{\printnames{labelname}}}
  {\multicitedelim}
  {\usebibmacro{postnote}}

\DeclareCiteCommand{\footcite}[\mkbibfootnote]
  {\usebibmacro{prenote}}
  {\usebibmacro{citeindex}%
  \printtext[bibhyperref]{ \usebibmacro{cite}}}
  {\multicitedelim}
  {\usebibmacro{postnote}}

\DeclareCiteCommand{\footcitetext}[\mkbibfootnotetext]
  {\usebibmacro{prenote}}
  {\usebibmacro{citeindex}%
   \printtext[bibhyperref]{\usebibmacro{cite}}}
  {\multicitedelim}
  {\usebibmacro{postnote}}

\DeclareCiteCommand{\textcite}
  {\boolfalse{cbx:parens}}
  {\usebibmacro{citeindex}%
   \printtext[bibhyperref]{\usebibmacro{textcite}}}
  {\ifbool{cbx:parens}
     {\bibcloseparen\global\boolfalse{cbx:parens}}
     {}%
   \multicitedelim}
  {\usebibmacro{textcite:postnote}}

\newbibmacro{string+doiurlisbn}[1]{%
  \iffieldundef{doi}{%
    \iffieldundef{url}{%
      \iffieldundef{isbn}{%
        \iffieldundef{issn}{%
          #1%
        }{%
          \href{http://books.google.com/books?vid=ISSN\thefield{issn}}{#1}%
        }%
      }{%
        \href{http://books.google.com/books?vid=ISBN\thefield{isbn}}{#1}%
      }%
    }{%
      \href{\thefield{url}}{#1}%
    }%
  }{%
    \href{https://doi.org/\thefield{doi}}{#1}%
  }%
}

\DeclareFieldFormat{title}{\usebibmacro{string+doiurlisbn}{\mkbibemph{#1}}}
\DeclareFieldFormat[article,incollection,inproceedings]{title}%
    {\usebibmacro{string+doiurlisbn}{#1}}
\input{includes/definitions.tex}%
\let\paperLambda\Lambda
\newcommand{\notationcommand}[3]{%
  \providecommand{#1}{}%
  \renewcommand{#1}{\newlink{#2}{#3}}%
}

\let\oldell\ell
\let\oldmu\mu
\let\oldeta\eta
\let\oldtau\tau
\let\oldGamma\Gamma
\let\oldDelta\Delta
\let\oldomega\omega
\let\oldrho\rho
\let\oldzeta\zeta
\let\oldpartial\partial
\let\olddelta\delta
\let\oldTheta\Theta
\let\oldOmega\Omega

\notationcommand{\p}{def:p}{p}
\notationcommand{\q}{def:q}{q}
\notationcommand{\d}{def:dimension}{d}
\notationcommand{\RR}{def:radius}{R}
\notationcommand{\G}{def:lipschitz}{G}
\notationcommand{\T}{def:horizon}{T}
\notationcommand{\eps}{def:accuracy}{\varepsilon}
\notationcommand{\delta}{def:confidence}{\olddelta}
\notationcommand{\f}{def:objective}{f}

\notationcommand{\g}{def:subgradient}{g}
\notationcommand{\Q}{def:domain}{\mathcal Q}
\notationcommand{\B}{def:ball}{B}
\notationcommand{\Baff}{def:affine-ball}{B}
\notationcommand{\ell}{def:norm}{\oldell}
\newcommand{\conj}{\newlink{def:conjugate}{\ast}}
\notationcommand{\partial}{def:subgradient}{\oldpartial}
\notationcommand{\R}{def:basic-notation}{\mathbb R}
\notationcommand{\E}{def:probability}{\mathbb E}
\notationcommand{\Prb}{def:probability}{\mathbb P}
\notationcommand{\cN}{def:gaussian}{\mathcal N}
\notationcommand{\Z}{def:gaussian}{Z}
\notationcommand{\Id}{def:gaussian}{I}
\notationcommand{\Sphere}{def:sphere}{\mathbb S}
\notationcommand{\sphsigma}{def:sphere}{\sigma}
\notationcommand{\pospart}{def:positive-part}{+}
\notationcommand{\1}{def:indicator}{\mathbf 1}
\notationcommand{\dist}{def:distance}{\operatorname{dist}}
\renewcommand{\norm}[1]{\newlink{def:norm}{\left\lVert #1\right\rVert}}
\renewcommand{\ip}[2]{\newlink{def:inner-product}{\left\langle #1,#2\right\rangle}}

\notationcommand{\loss}{def:online-loss}{\oldell}
\notationcommand{\epssign}{def:online-signs}{\varepsilon}
\notationcommand{\Reg}{def:regret}{\operatorname{Reg}}
\notationcommand{\xonline}{def:online-iterates}{x}
\notationcommand{\xchase}{def:chasing-iterates}{x}
\notationcommand{\xquery}{def:query-points}{x}
\notationcommand{\ebasis}{def:standard-basis}{e}
\notationcommand{\Nphase}{def:phase-horizon}{N}
\notationcommand{\stail}{def:head-tail-exponent}{s}
\notationcommand{\K}{def:requests}{K}
\notationcommand{\z}{def:selector-points}{z}
\notationcommand{\zhat}{def:approximate-center}{\widehat z}
\notationcommand{\model}{def:bundle-model}{m}
\notationcommand{\lb}{def:lower-bound}{\oldell}
\notationcommand{\U}{def:upper-bound}{U}
\notationcommand{\Gamma}{def:phase-scale}{\oldGamma}
\newcommand{\hminus}{\newlink{def:lower-level}{h}_{-}}
\newcommand{\hplus}{\newlink{def:upper-level}{h}_{+}}

\notationcommand{\oracle}{def:oracle}{\mathfrak O}
\notationcommand{\selector}{def:selector-rule}{\mathsf C}

\notationcommand{\xhat}{def:output}{\widehat x}
\notationcommand{\rho}{def:reduction-exponent}{\oldrho}

\notationcommand{\supporth}{def:support-function}{h}
\notationcommand{\st}{def:steiner}{\operatorname{st}}
\notationcommand{\omega}{def:mean-width}{\oldomega}
\notationcommand{\shat}{def:empirical-steiner}{\widehat s}
\notationcommand{\Nsample}{def:sample-count}{N}
\notationcommand{\errE}{def:sampling-error}{E}
\notationcommand{\tau}{def:threshold}{\oldtau}

\notationcommand{\Rtail}{def:tail-radius}{R}
\notationcommand{\rmin}{def:tail-exponent}{r}
\notationcommand{\snear}{def:head-exponent}{s}

\notationcommand{\cL}{def:lift}{\mathcal L}
\notationcommand{\mu}{def:mu}{\oldmu}
\notationcommand{\eta}{def:eta}{\oldeta}
\notationcommand{\gdir}{def:perturbation}{g}
\notationcommand{\usample}{def:sample-optimizer}{u}
\notationcommand{\vsample}{def:sample-optimizer}{v}
\notationcommand{\F}{def:sample-objective}{F}

\notationcommand{\D}{def:bregman}{D}

\notationcommand{\Chat}{def:empirical-center}{\widehat C}
\notationcommand{\Y}{def:center-sample}{Y}
\notationcommand{\Yhat}{def:numerical-sample}{\widehat Y}

\notationcommand{\mval}{def:sample-value}{m}
\notationcommand{\Delta}{def:value-increment}{\oldDelta}
\notationcommand{\ubar}{def:mean-components}{\bar u}
\notationcommand{\vbar}{def:mean-components}{\bar v}
\notationcommand{\zeta}{def:objective-tolerance}{\oldzeta}
\notationcommand{\Lambda}{def:log-factor}{\oldGamma}

\notationcommand{\bigO}{def:asymptotics}{O}
\notationcommand{\bigOtilde}{def:soft-asymptotics}{\widetilde O}
\notationcommand{\Theta}{def:asymptotics}{\oldTheta}
\notationcommand{\Omega}{def:asymptotics}{\oldOmega}
\notationcommand{\Thetatilde}{def:soft-asymptotics}{\widetilde\oldTheta}

\renewcommand{\bigopl}[2]{\bigO_{#1}\left(#2\right)}

\renewcommand{\bigotildepl}[2]{\bigOtilde_{#1}\left(#2\right)}

\hypersetup{colorlinks=true,allcolors=blue,linktoc=all,hypertexnames=false}
\renewcommand{\Lambda}{\paperLambda}
\renewcommand{\nu}{\newlink{def:numerical-tolerance}{\oldnu}}

\ifusecolttemplate
  \newtheorem{assumption}[theorem]{Assumption}
\else
  \theoremstyle{plain}
  \newtheorem{theorem}{Theorem}[section]
  \newtheorem{lemma}[theorem]{Lemma}
  \newtheorem{proposition}[theorem]{Proposition}
  
  \theoremstyle{definition}
  \newtheorem{definition}[theorem]{Definition}

  \newcommand{\coltauthor}[1]{\author{#1}}
\fi

\crefname{algorithm}{Algorithm}{Algorithms}
\crefname{assumption}{Assumption}{Assumptions}
\crefname{proposition}{Proposition}{Propositions}
\crefname{corollary}{Corollary}{Corollaries}
\crefname{lemma}{Lemma}{Lemmas}
\crefname{theorem}{Theorem}{Theorems}
\crefname{definition}{Definition}{Definitions}
\crefname{remark}{Remark}{Remarks}

\newcommand{\Center}{\mathsf C}
\newcommand{\Step}{\mathsf{Step}}
\newcommand{\Dholder}{\mathcal D_\kappa}
\newcommand{\Ncone}{N}
\newcommand{\Menv}{M_A}

\newcommand{\authorasterisk}{\textsuperscript{\normalfont *}}

\ifusecolttemplate
  \title[Near-Optimal Smooth Nondual Convex Optimization]{Near-Optimal Acceleration for Smooth \texorpdfstring{$\ell_p/\ell_q$}{lp/lq} Nondual Convex First-Order Oracle Optimization}
\else
  \title{Near-Optimal Acceleration for Smooth \texorpdfstring{$\ell_p/\ell_q$}{lp/lq} Nondual Convex First-Order Oracle Optimization}
\fi
\coltauthor{
\Name{David Martínez-Rubio}\Email{\href{mailto:david.martinezrubio@imdea.org}{david.martinezrubio@imdea.org}}\\
\addr IMDEA Software Institute, Madrid, Spain
\AND
\Name{Brian Bullins\nametag{\authorasterisk}}\Email{\href{bbullins@purdue.edu}{bbullins@purdue.edu}}\\
\addr Purdue University, West Lafayette, IN, USA
\AND
\Name{Cristóbal Guzmán\nametag{\authorasterisk}}\Email{\href{mailto:crguzmanp@uc.cl}{crguzmanp@uc.cl}}\\
\addr Institute for Mathematical and Computational Engineering, Faculty of Mathematics and
School of Engineering, Pontificia Universidad Católica de Chile, Santiago, Chile
\AND
\Name{Mathieu Molina\nametag{\authorasterisk}} \Email{\href{mailto:mathieu.molina.research@gmail.com}{mathieu.molina.research@gmail.com}}\\
\addr Tel Aviv University, Israel
}
\date{}

\newcommand\blfootnote[1]{%
  \begingroup
  \renewcommand\thefootnote{}\footnote{#1}%
  \addtocounter{footnote}{-1}%
  \endgroup
}

\begin{document}
\maketitle

\begin{abstract}
    We study the optimization of convex objectives with $(L,\kappa-1)$-H\"older-continuous gradients in $\ell_q$ over $R B_p^d$, $1<\kappa\le2$. \citet{martinezRubioGuzman2026stable} provides selectors with a movement bound for the problem of chasing high-dimensional convex nested sets for every $p<q$ and generally reduces Lipschitz convex optimization to bounds on the movement of selectors. We couple that movement with H\"older descent yielding a polynomial-runtime first-order method whose feasible output, in the high-dimensional regime $T\le d$ and for
$p<\min\{q,2\}$, has error
\begin{equation*}
  \widetilde O_{\kappa,p,q}\!\left(
    \frac{LR^\kappa}{T^{\kappa(1+1/p-(1/q-1/2)_+)-1}}
  \right),
\end{equation*}
after $T$ queries to a first-order oracle, solving the COLT 2015 open problem of \citet{guzman2015open}, up to logarithmic factors. At $(p,q)=(1,2)$, the rate is $\widetilde O(LR^\kappa/T^{2\kappa-1})$, including $\widetilde O(LR^2/T^{3})$ cubic decay in the smooth case.

\end{abstract}

\blfootnote{This draft is not yet in the form in which we would have liked to share it. In light of recent developments, we have nevertheless decided to make it available now. We plan to polish and revise this work shortly.}
\begingroup
\renewcommand{\thefootnote}{*}
\footnotetext{Authors marked with an asterisk are listed alphabetically.}
\endgroup

\section{Introduction and result}

The open problem of \citet{guzman2015open} asks for the minimax risk of
black-box convex optimization when the domain is an $\ell_p$ ball but
smoothness is measured in a different $\ell_q$ norm.  The mismatch matters
most for $p<\min\{q,2\}$: the feasible set is smaller than the natural $\ell_q$ ball,
yet classical accelerated methods use only $\ell_p$ or $\ell_q$ geometry and do not match known lower bounds.  For instance at $(p,q)=(1,2)$ and Lipschitz gradient smoothness, the classical rate is
$O(T^{-2})$ while the lower bound predicts $O(T^{-3})$.

The nonsmooth endpoint of this question was solved in \citep{martinezRubioGuzman2026stable} by reducing a level bundle method to movement of nested convex bodies in high dimensions. This paper shows that the same movement primitive
also supplies the missing acceleration for every $1<\kappa\le2$.  The
mechanism is not an ordinary acceleration wrapper.  A descent step contributes to a reduction in function value in the form of a positive power of the gradient norm, while a not-small-enough evaluation produces a deep cut in the bundle sublevel set and forces movement inversely proportional to that norm. The means inequalities balances the two effects, yielding acceleration, after a coupling of the movement selector and the descent point.

More generally, our reduction takes as input a feasible selector with
the required movement bound and inherits polynomial running time when that
selector is computable in polynomial time on the generated bodies.
\citet{martinezRubioGuzman2026stable} supplies the selectors and their implementation. The contribution here is the conversion of movement into H\"older acceleration, including the coupling that preserves
feasibility for constrained optimization.

If the problem is unconstrained and a global minimizer $x^\ast$ is known to satisfy $\norm{x^\ast}_p\le R$, we can provide a simple analysis of an algorithm that runs the movement selector on the auxiliary body $R B_p^d$, while descent steps and the final output are allowed to fall outside and returns a point outside it.  For the full solution, we want to return a feasible point, and we would not assume there is a zero
gradient in the set. After presenting our solution in the simplified setting, the second half of the paper constructs a solution based on approximating a Moreau envelope which, up to handling errors, reduces the problem to the previous case.

\subsection{Setting}

\newtarget{def:norm}{}\newtarget{def:conjugate}{}\newtarget{def:basic-notation}{}\newtarget{def:inner-product}{}
We use $1/\infty=0$. For $1\le r\le\infty$, the conjugate exponent
$r^\ast$ satisfies $1/r+1/r^\ast=1$, and
$B_r^d=\{x\in\mathbb R^d:\|x\|_r\le1\}$. We write $\langle u,v\rangle=\sum_{i=1}^d u_iv_i$ for the standard inner product.

\newtarget{def:q}{}
Assume
\begin{equation}\label{eq:holder-smoothness}
  \norm{\nabla f(v)-\nabla f(u)}_{q^\ast}
  \le L\norm{v-u}_q^{\kappa-1},
  \qquad 1<\kappa\le2.
\end{equation}
Integrating the gradient on a segment gives
\begin{equation}\label{eq:holder-descent-lemma}
  f(v)
  \le f(u)+\ip{\nabla f(u)}{v-u}
  +\frac L\kappa\norm{v-u}_q^\kappa.
\end{equation}
\newtarget{def:reduction-exponent}{}
For $p<q$, define the movement exponent
\begin{equation}\label{eq:rho-holder}
  \rho_{p,q}
  \defi
  \frac1p-\left(\frac1q-\frac12\right)_+.
\end{equation}
Throughout, $\widetilde O_{\kappa,p,q}$ omits constants depending on fixed
regularity and norm parameters and factors polynomial in logarithms of
$d,T$, the confidence level, and the requested numerical accuracy; it never
omits a polynomial in $d$.

\newtarget{def:domain}{}\newtarget{def:objective}{}\newtarget{def:confidence}{}
\begin{theorem}[Nondual H\"older accelerated optimization]
\label{thm:holder-main}\linktoproof{thm:holder-main}
Let $R,L>0$, let $\Q=R B_p^d$, let $1\le p,q\le\infty$, and let
$f$ be convex and differentiable on a neighborhood of $\Q$, satisfying
\cref{eq:holder-smoothness} for some $1<\kappa\le2$.  Fix
    $\delta\in(0,1)$.  For $T\le d$, and either $p<\min\{q,2\}$ or $p=q\le2$, there is a polynomial-time algorithm in the real-arithmetic model, making at most $T$ first-order oracle queries
and returning $\widehat x_T\in\Q$ with probability at least $1-\delta$ such that 
\begin{equation}\label{eq:holder-main-rate}
  f(\widehat x_T)-\min_{u\in\Q}f(u)
  \le
    \widetilde O_{\kappa,p,q}\!\left(
      \frac{LR^\kappa}
      {T^{\kappa(1+1/p-(1/q-1/2)_+)-1}}
    \right),
\end{equation}
\end{theorem}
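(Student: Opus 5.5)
We would prove \cref{thm:holder-main} by coupling a level-bundle method, driven by the movement selector of \citet{martinezRubioGuzman2026stable}, with H\"older descent steps. We first treat the simplified setting where a minimizer $x^\ast$ with $\norm{x^\ast}_p\le R$ exists and descent points may leave $\Q$; afterwards we remove this via a Moreau envelope. The movement primitive we assume is the following. For nested convex bodies $\Q\supseteq K_1\supseteq K_2\supseteq\cdots$ with $T\le d$, there are (randomized, polynomial-time) selectors $z_t\in K_t$ with $\sum_{t\le T}\norm{z_t-z_{t-1}}_q\le\widetilde O(R\,T^{1-\rho_{p,q}})$ with probability $1-\delta$. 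Here $\rho_{p,q}$ is the movement exponent in \cref{eq:rho-holder}, and the case $p=q\le 2$ corresponds to the trivial/Euclidean-type bound.

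The algorithm runs in phases with a target level $\lb$; within a phase we maintain the sublevel body $K_t=\{x\in\Q:\ f(y_s)+\ip{\nabla f(y_s)}{x-y_s}\le \lb \ \forall s\le t\}$. At step $t$ we query the gradient at a coupled point $y_t$ and take a descent step $x_t^+=\arg\min_v\{f(y_t)+\ip{\nabla f(y_t)}{v-y_t}+\frac L\kappa\norm{v-y_t}_q^\kappa\}$. By \cref{eq:holder-descent-lemma} this gives
\[
f(y_t)-f(x_t^+)\ge c_\kappa L^{-1/(\kappa-1)}\norm{\nabla f(y_t)}_{q^\ast}^{\kappa/(\kappa-1)}.
\]
If $f(y_t)$ is not already below $\lb+\Delta$, where $\Delta$ is the phase gap, the cut at $y_t$ excludes a neighborhood of $y_t$: every $x\in K_t$ satisfies $\ip{\nabla f(y_t)}{y_t-x}\ge \Delta$, hence $\norm{z_t-y_t}_q\ge \Delta/\norm{\nabla f(y_t)}_{q^\ast}$. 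Taking $y_t$ to be (a coupling with) the previous selector point, this distance is charged to the selector movement $\norm{z_t-z_{t-1}}_q$. Call $g_t=\norm{\nabla f(y_t)}_{q^\ast}$. Summing the movement gives $\sum_t \Delta/g_t\le \widetilde O(RT^{1-\rho})$, and summing the descent gives $\sum_t L^{-1/(\kappa-1)}g_t^{\kappa/(\kappa-1)}\lesssim$ the total value gap, which is at most $LR^\kappa$ initially. By H\"older/means inequality on $g_t^{-1}$ and $g_t^{\kappa/(\kappa-1)}$ we obtain $T^{1+1/\kappa'}\lesssim(\sum g_t^{-1})^{\cdots}(\sum g_t^{\kappa'})^{\cdots}$. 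Solving for $\Delta$ shows that after $T$ steps within a phase the gap shrinks geometrically unless $\Delta\lesssim LR^\kappa T^{-(\kappa(1+\rho)-1)}$, which is exactly the exponent in \cref{eq:holder-main-rate}. Phases with $\lb$ updated from the best value and a lower bound, as in a level method, then cost only logarithmic overhead.

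The main obstacle is the coupling: the point $y_t$ where we evaluate must simultaneously be close to the selector so that movement is charged, and be the base of a descent step so that value decreases are telescoping across iterations. The first thing I would try is to set $y_t$ as a convex combination of $z_{t-1}$ and the current best descent point, with weights chosen like in linear coupling, and verify that the cut inequality still lower-bounds selector movement up to constants.

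Feasibility is the second difficulty. For the general constrained statement, we replace $f$ by the Moreau-type envelope $f_\lambda(x)=\min_{u\in\Q}\{f(u)+\frac{1}{\lambda}\D(u,x)\}$ with $\lambda$ tuned to the target accuracy. Its minimizer satisfies the unconstrained hypothesis, it inherits H\"older smoothness in $\ell_q$ up to constants, and its inner problems are solved approximately by cheap subroutines. We then output the proximal point, which lies in $\Q$, and absorb the inexactness into the $\widetilde O$.
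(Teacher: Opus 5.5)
\textbf{There is a genuine gap, at the coupling step, and the feasibility step is also not yet an argument.} Your architecture is the paper's: level phases, a selector on nested sublevel bodies, power-mean descent against harmonic-mean movement, H\"older's inequality, then an envelope for feasibility. But the step you flag as the main obstacle is the crux, and your convex-combination fix does not work.

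Take $y_t=\lambda z_{t-1}+(1-\lambda)w_t$, with $w_t$ the last descent point. The cut at $y_t$ only keeps the new body at $\ell_q$-distance at least $\Gamma/g_t$ from $y_t$, where $\Gamma$ is your phase gap. At the selector point the tangent equals $f(y_t)+(1-\lambda)\langle\nabla f(y_t),z_{t-1}-w_t\rangle$, and this correction term has no sign. Whenever $f$ decreases from $y_t$ towards $z_{t-1}$, the old selector point already satisfies the new cut and need not move, so $\sum_t 1/g_t$ is never charged to movement. The triangle inequality loses $(1-\lambda)\|z_{t-1}-w_t\|_q$, which is small only if $\lambda\approx1$, and then the descent no longer telescopes. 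Linear-coupling weights do not rescue this: that analysis relies on a mirror-descent regret bound for the $z$-sequence, which a nested-body selector does not supply.

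The missing idea is a line search on the segment. Take $x_t\in\arg\min_{w\in[y_t,z_t]}f(w)$, with $y_t$ the current descent point and $z_t$ the current selector point. Optimality gives two things at once:
\begin{itemize}
\item $f(x_t)\le f(y_t)$, so the H\"older decreases from $x_t$ telescope;
\item $\langle\nabla f(x_t),z_t-x_t\rangle\ge0$, so the tangent at $x_t$ is at least $f(x_t)$ at $z_t$.
\end{itemize}
Hence in every step without upper progress the cut is at least as deep at $z_t$ as at $x_t$. This forces $\|z_{t+1}-z_t\|_q\ge\Gamma/g_t$ unless the next body is empty.

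Also, the descent must be budgeted per phase: $\sum_t g_t^{\kappa/(\kappa-1)}\lesssim L^{1/(\kappa-1)}\Gamma$. With the global budget $LR^\kappa$ that you mention, the same H\"older computation only gives the exponent $1-1/\kappa+\rho$.

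\textbf{Feasibility.} This is not a black-box reduction either. The envelope of $f+\iota_{\mathcal Q}$ does not inherit H\"older smoothness with constant $L$. Already for linear $f$, where any $L>0$ is admissible, its gradient varies at rate $A=1/\lambda$ near and outside $\mathcal Q$. What you actually get is the quadratic phase bound $N^{1+2\rho}\lesssim AR^2/\Gamma$. The H\"older exponent reappears only after taking $A$ as small as the inner prox solves allow.

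The paper gets this from the inexact-gradient trick: $f$ is $H_\omega$-smooth up to an additive $\omega$, so a bundle prox solve converges linearly once $A\gtrsim H_\omega$. It then sets $\omega\asymp\Gamma/N_\Gamma$ and $A_\Gamma\asymp L^{2/\kappa}(N_\Gamma/\Gamma)^{(2-\kappa)/\kappa}$. It never uses smoothness of the envelope:
\begin{itemize}
\item descent comes from the approximate prox point itself, $f(y)\le M_A(c)+\tau-\frac A2\|y-c\|_s^2$;
\item the segment search is a sign bisection on certified $\tau$-subgradients $g=AJ_s(c-x)$ of $M_A$;
\item the retained cut is the aggregate cut $m(x)+\langle g,v-x\rangle$, which is $\le f$ on $\mathcal Q$.
\end{itemize}
``Absorbing inexactness into $\widetilde O$'' is exactly this bookkeeping. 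Descent slack must total $O(\Gamma)$ per phase and cut slack must stay below $\Gamma/2$, which forces per-step tolerances of order $\Gamma/N_\Gamma$ at logarithmic oracle cost.

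Two smaller points on the envelope. Use the jointly convex term $\frac A2\|u-c\|_s^2$ with $s=\min\{q,2\}$ rather than a Bregman divergence, which is generally not convex in its second argument. For $q>2$, $\|\cdot\|_q^2$ is not uniformly strongly convex, so the paper passes to $s=2$ by norm comparison.

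Finally, for $p=q\le2$ the paper does not use a $\rho=1/2$ selector; it invokes Nesterov's universal method in the $p$-geometry.
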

Polynomial running time is understood in the real-arithmetic model of \cref{sec:implementation}, with the selector implementation verified in \cref{prop:selector-implementation}, with fixed regularity and norm parameters. We note that there is also a deterministic algorithm if we use the deterministic stable selectors in \citet{martinezRubioGuzman2026stable}. We do not claim efficiency for that deterministic algorithm but we note that \citet{martinezRubioGuzman2026stable} showed lower bounds of essentially the same order against randomized algorithms in comparison to the ones in the open question \citep{guzman2015open} for the Lipschitz problem.

The rate above, along with the observation that for $2 \leq p < q$ one can run the classical modified Nesterov method in the $p$-geometry \citep[Section~4.1.B]{guzmanNemirovski2015lower} to match the lower bounds in \citet{guzmanNemirovski2015lower} up to log factors, solves the COLT 2015 open problem \citep{guzman2015open}. The randomized implementation attains the same upper bound; we do not infer a randomized lower bound from the deterministic one.

\paragraph{Related optimization methods.}
The diagonal $p=q$ smooth rates go back to classical work
\citep{nemirovskiiNesterov1985}, while universal methods cover unknown
H\"older regularity \citep{nesterov2015universal}.  Our algorithm draws on level and bundle methods
\citep{lemarechal1995new} as realized by \citep{martinezRubioGuzman2026stable}. The new point is the coupling to the
nonstandard movement geometry and the feasible envelope construction.

An independent work \citep{ouyang2026algorithm} appeared on arXiv as we prepared the writing of our results. The work studies the specific case of quadratics when $(\p, \q)=(1,2)$ for the smooth case ($\kappa=2$).

\section{Warm-up: power descent versus harmonic movement}\label{sec:warmup}

In order to understand the solution, it is convenient to understand how the non-smooth rate of \citep{martinezRubioGuzman2026stable} is achieved. For intuition, we include \cref{fig:bundle-intuition}, borrowed from the paper.

\begin{figure}[h!]
  \centering
  \begin{tikzpicture}[
      x=1cm,
      y=.85cm,
      >=stealth,
      every node/.style={font=\small}
    ]
    \draw[->, black!65] (0.65,0.35) -- (11.45,0.35);
    \draw[->, black!65] (0.8,0.2) -- (0.8,5.65);
    \draw[densely dashed, black!45] (0.8,2.25) -- (11.3,2.25);
    \draw[densely dashed, black!45] (0.8,4.0) -- (11.3,4.0);
    \node[anchor=east] at (0.7,2.25) {$\hminus$};
    \node[anchor=east] at (0.7,4.0) {$\hplus$};

    \draw[densely dotted, black!50] (0.9,5.36) -- (5.05,0.42);
    \draw[densely dotted, black!50] (1.0,1.24) -- (11.0,2.14);
    \draw[densely dotted, blue!55!black] (5.1,0.42) -- (10.6,5.48);
    \draw[very thick, black!75] (0.9,5.36) -- (4.1,1.55);
    \draw[very thick, black!75] (4.1,1.55) -- (6.6,1.80);
    \draw[very thick, blue!65!black] (6.6,1.80) -- (10.6,5.48);
    \node[black!65, anchor=east] at (1.35,4.55) {$a_0$};
    \node[black!65, anchor=south] at (5.15,1.72) {$a_1$};
    \node[black, anchor=west] at (10.25,4.85) {$a_2$};

    \draw[gray!65, line width=2.6pt] (3.51,2.25) -- (11.05,2.25);
    \draw[black, line width=3.2pt] (3.51,2.25) -- (7.09,2.25);
    \node[anchor=south] at (5.30,2.30) {$\K_{3}$};
    \node[text=gray!70, anchor=north] at (8.75,1.70) {old $K_2$};

    \coordinate (querypoint) at (9.70,4.65);
    \fill[blue!65!black] (querypoint) circle (2pt);
    \node[black, anchor=east] at (9.48,4.72)
      {$\f(\z_2)$};
    \draw[densely dotted, black!55] (9.70,0.35) -- (querypoint);
    \node[anchor=north] at (9.70,0.31) {$\z_2$};

    \draw[densely dotted, black!55] (7.09,2.25) -- (7.09,2.82);
    \draw[<->, semithick] (7.12,2.72) -- (9.67,2.72)
      node[midway, fill=white, inner sep=1.5pt] {$\ge\Gamma/(2\G)$};
  \end{tikzpicture}
    \caption{A one-dimensional section of a step where the function value is no less than $h_+$ (null step).  The old model
    $\max\{a_0,a_1\}$ lies below $\hminus$ at $\z_2$ (that is, $z_2 \in K_2$), whereas the new tangent
  satisfies $a_2(\z_2)=\f(\z_2)>\hplus$.  Adding $a_2$ reduces the
    $(\hminus)$-sublevel to the thick black segment ($K_{3} \subset K_{2}$), separated from
  $\z_2$ by at least $\Gamma/(2\G)$ as the subgradient norm is no larger than $G$. Recall that $\Gamma$ is the duality gap at the beginning of a phase.}
  \label{fig:bundle-intuition}
\end{figure}

\newtarget{def:bundle-model}{}\newtarget{def:query-points}{}\newtarget{def:subgradient}{}\newtarget{def:lower-bound}{}\newtarget{def:upper-bound}{}
\paragraph{Non-smooth solution from \citep{martinezRubioGuzman2026stable}} Before query $t\ge1$, let
\[
  \model_t(x)=\max_{0\le i<t}\{\f(\xquery_i)+\ip{\g_i}{x-\xquery_i}\},
  \qquad
  \lb_t=\min_{x\in\Q}\model_t(x),
  \qquad
  \U_t=\min_{0\le i<t}\f(\xquery_i).
\]
Convexity gives $\lb_t\le \min_{x\in\Q} f(x)\le\U_t$.  We run the algorithm in phases. Freeze
\newtarget{def:phase-scale}{}\newtarget{def:lower-level}{}\newtarget{def:upper-level}{}
the phase scale $\Gamma=(\U-\lb)/2$ and two levels
$\hminus=\lb+\Gamma/2<\hplus=\lb+3\Gamma/2$, and query a stable selector
\newtarget{def:requests}{}\newtarget{def:selector-points}{}
$\xquery_t=\z_t\in\K_t:=\{x\in\Q:\model_t(x)\le\hminus\}$.  If
$\f(\z_t)\le\hplus$, the upper bound improves by a constant fraction.  Otherwise
the new tangent $a_t(x)=\f(\z_t)+\ip{\g_t}{x-\z_t}$ gives
$\K_{t+1}=\K_t\cap\{a_t\le\hminus\}$.  This retained halfspace is at $\ell_{\q}$-distance \newtarget{def:lipschitz}{} at least $(\hplus-\hminus)/\norm{\g_t}_{\q^{\conj}}=\Gamma/\norm{\g_t}_{\q^{\conj}}$ from $\z_t$.  Thus every such \emph{deep cut} forces movement of the selector, unless the new body is empty;
see \cref{fig:bundle-intuition}.
An upper bound on the total movement $\sum_{t=1}^T\norm{z_t-z_{t+1}}_q$ after $T$ steps coming from the chasing convex bodies problem along with the fact that the movement is lower bounded by $T \cdot \Gamma/\G$ gives a maximum number of steps until we see an empty sublevel set (so the lower bound increases to at least $h_{-}$) or we have $f(z_{t}) \leq h_{+}$. Either way, we decrease the gap by a constant factor and continue to the next phase.

\paragraph{Hölder-smooth warm-up}

In this warm-up, suppose the function is convex and Hölder smooth globally but its global minimum is in $R B_p^d$ while we initialized at $0$, and we allow descent points $y_t$ and the points $x_t$ that we use to query the first-order oracle to be outside of $RB_p^d$ while we have a stable selector $z_t \in RB_p^d$. We assume being able to perform exact segment function value minimization in this warm-up, for simplicity. The algorithm has some similarities to the nonsmooth solution: we run the bundle method, except that within a phase, we query at $x_t\in\argmin_{x\in[y_t,z_t]}f(x)$ receiving $f(x_t)$ and $g_t=\nabla f(x_t)$, then descend to $y_{t+1}$ from $x_t$ optimizing the unconstrained upper bound \cref{eq:holder-descent-lemma} on the function that Hölder smoothness gives while $z_t$ is the proposed stable selector. By optimality, the first-order oracle query at $x_t$ also either has function value at most $h_{+}$ or it produces a deep cut separating $z_t$ from the next sublevel set.

Line search gives $f(x_t)\le f(y_t)$ and $\ip{g_t}{z_t-x_t}\ge0$: descent improves the old value,
and the tangent $a_t(u)=f(x_t)+\ip{g_t}{u-x_t}$ satisfies $a_t(z_t)\ge f(x_t)$.
For the smooth method, replace the fixed levels by the moving target below. If descent guarantees decrease $D_t$, set $U_{t+1}=f(x_t)-D_t$, add $a_t$ to the bundle,
and lower all cuts' target to $U_{t+1}-\Gamma$, with phase scale $\Gamma$.
Convexity preserves all points of $RB_p^d$ below this target level; the bodies stay nested.
Thus the same gradient gives decrease $D_t$ and forces selector movement at least
$(\Gamma+D_t)/\norm{g_t}_{q^\ast}$, unless the next body is empty. In particular, the deep cut at $x_t$ is also a deep cut for $z_t$ and we force movement similarly as if we had queried at $z_t$ instead.

For $p<q$, \citet{martinezRubioGuzman2026stable} provides, for every nonempty
nested sequence $R B_p^d\supseteq K_0\supseteq\cdots\supseteq K_N$, a
selector $z_t\in K_t$ satisfying
\begin{equation*}
  \sum_{t<N}\norm{z_{t+1}-z_t}_q
  =\widetilde O_{p,q}\!\left(RN^{1-\rho_{p,q}}\right),
\end{equation*}
where $\rho_{p,q}$ is defined in \cref{eq:rho-holder}.

Let $g$ be a gradient and $G=\norm{g}_{q^\ast}$.  Choose a support vector
$v_q(g)$, i.e.~such that  $\norm{v_q(g)}_q=1$ and
$\ip{g}{v_q(g)}=G$.  Minimizing the H\"older upper model along
$-v_q(g)$ gives the step length and guaranteed decrease in function value $\Dholder(G)$, where
\begin{equation*}
  a=(G/L)^{1/(\kappa-1)}, \qquad
  \Dholder(G) \defi \frac{\kappa-1}\kappa L^{-1/(\kappa-1)}G^{m_\kappa}, \text{ for } m_\kappa=\frac\kappa{\kappa-1}.
\end{equation*}
Thus gradient descent controls the $m_\kappa$-power mean of the gradient norms.

If $G_t=0$, the queried point is a global minimizer and we stop. Otherwise, a tangent cut with level violation $\Gamma$ and subgradient norm $G_t$ has depth
$\Gamma/G_t$.  Hence a purely movement-based estimate controls
\begin{equation*}
  \Gamma\sum_{t<N}\frac1{G_t},
\end{equation*}
or equivalently the harmonic mean of the subgradient norms.  The inequality between
the harmonic mean and the $m_\kappa$-power mean is
\begin{equation}\label{eq:harmonic-power-mean}
  \frac{N}{\sum_{t<N}1/G_t}
  \le
  \left(\frac1N\sum_{t<N}G_t^{m_\kappa}\right)^{1/m_\kappa}.
\end{equation}

If one phase has scale $\Gamma$, the concatenated guaranteed descent is at most $U_0-f^\star\le U_0-\ell_0=2\Gamma$. For $N$ nonterminal transitions in this phase, descent gives
\begin{equation*}
  \sum_{t=0}^{N-1} G_t^{m_\kappa}
  \lesssim_\kappa L^{1/(\kappa-1)}\Gamma,
\end{equation*}
whereas movement gives
\begin{equation*}
  \Gamma\sum_{t=0}^{N-1}\frac1{G_t}
  =\widetilde O\!\left(RN^{1-\rho}\right).
\end{equation*}
Substituting these estimates into \cref{eq:harmonic-power-mean} and using
$m_\kappa=\kappa/(\kappa-1)$ yields
\begin{equation*}
  N^{\kappa-1+\kappa\rho}
  =\widetilde O_\kappa\!\left(\frac{LR^\kappa}{\Gamma}\right).
\end{equation*}
This is the desired exponent that yields optimality up to log factors. The line search makes both estimates
simultaneously valid; the later envelope construction preserves this tradeoff
using quadratic descent and phase tuning, with controlled numerical errors
and feasible queries.

To sum up, we know from previous work that we have certain movement bound $\widetilde{O}(RN^{1-\rho})$ with using some stable selectors $z_t$. We keep descent points $y_t$ from doing classical in the unconstrained problem, and we select the point $x_t$ where we will do the first-order oracle query as the minimizer of $f$ on $[y_t,z_t]$ (or an approximation with controlled errors). Optimality of the line search creates a deep cut over $z_t$ in the nested convex bodies subproblem, while ensuring $f(x_t)\le f(y_t)$. This is reminiscent of linear coupling and other accelerated regimes combining mirror descent and descent steps. Smaller gradient norms $G_t$ strengthen the guaranteed cut depth $\Gamma/G_t$, while larger norms strengthen the guaranteed descent. The means inequality balances those two phenomena. The use of this inequality is reminiscent of the intuitive explanation in \citet[Thought Experiment, p.~4]{allenZhuOrecchia2014linear} (that inspired the idea of this paper) and, in hindsight, of the use of the reverse Hölder inequality in higher-order acceleration \citep{carmonEtAl2022optimal,adilEtAl2024oracles,contrerasEtAl2024nonEuclidean}.

We note that convergence rates depending on the harmonic mean of the gradient norms is an identified phenomenon, see \citep{levy2017online,orabona2023normalized}. Our work provides an intuitive geometric explanation to this phenomenon in the form of our realized moment bound.

\section{Algorithm at a glance}

The method runs in phases. Each inner bundle call builds a tangent
model, and the outer level method retains its certified aggregate cut.
The maximum of the retained aggregate cuts forms a model $m\le f$ on
$\Q$. Minimizing $m$
gives a certified lower bound, while a retained feasible point
gives an upper bound.  Each phase fixes a fraction of this certified gap,
forms a corresponding model sublevel set, and queries the movement selector
inside that set. The coupled-step module then returns a feasible
upper witness and a new valid affine cut.

\newtarget{def:accuracy}{}
\begin{breakablealgorithm}
\caption{Feasible H\"older movement-coupled level method}
\label{alg:holder-main}
\begin{algorithmic}[1]
  \REQUIRE Feasible set $\Q=R B_p^d$; first-order oracle; H\"older
    parameters $(\kappa,L)$; accuracy $\eps$; movement selector
    $\Center$; query budget $J$ from \cref{eq:oracle-budget}.
  \REQUIRE Feasible envelope step $\Step$; initial feasible upper
    witness $y$, upper certificate $U\ge f(y)$, and max-affine model
    $m\le f$ on $\Q$.
  \ENSURE A feasible witness; its certified gap is at most $\eps$ on
    simultaneous selector success.
  \STATE Count every first-order call, including initialization and
    inner line-search calls. Before call $J+1$, return the retained $y$.
  \STATE $\ell\gets\min_{u\in\Q}m(u)$.
  \WHILE{$U-\ell>\eps$}
    \STATE $\Gamma\gets(U-\ell)/2$ and
      $U^{\mathrm{start}}\gets U$.
    \STATE Choose the planned length $N_\Gamma$ by
      \cref{eq:planned-phase-horizon}; set the envelope
      coefficient $A_\Gamma$, bundle tolerance $\tau_\Gamma$, and line-search
      tolerance $\eta_\Gamma$ by \cref{eq:phase-parameters}.
    \STATE Restart the horizon-dependent selector $\Center$ for
      $N_\Gamma$ transitions ($N_\Gamma+1$ selector points); set $n\gets0$.
    \WHILE{$U>U^{\mathrm{start}}-\Gamma/4$}
      \STATE $K\gets\set{u\in\Q:m(u)\le U-\Gamma}$.
      \IF{$K=\varnothing$}
        \STATE \textbf{break}.
      \ENDIF
      \STATE If $n=N_\Gamma+1$, return $y$; otherwise set $n\gets n+1$.
      \STATE $z\gets\Center(K)$; if it signals failure, return $y$.
      \STATE $(y,U,a,G,\eta,\xi)\gets
        \Step(y,U,z,\Gamma;A_\Gamma,\tau_\Gamma,\eta_\Gamma)$.
      \COMMENT{valid cut $a\le f$, cut-normal norm $G$, errors $\eta,\xi$}
      \STATE $m(u)\gets\max\{m(u),a(u)\}$ for $u\in\Q$.
    \ENDWHILE
    \STATE $\ell\gets\min_{u\in\Q}m(u)$.
  \ENDWHILE
  \STATE \textbf{return} $\widehat y\gets y$.
\end{algorithmic}
\end{breakablealgorithm}

The step $\Step(y,U,z,\Gamma;A,\tau,\eta)$ uses the
bracketed envelope line search of
\cref{lem:envelope-line-search}: at each trial center it calls the certified
proximal bundle solver of \cref{prop:bundle-solve}, then returns the feasible
upper witness and aggregate cut described in \cref{lem:feasible-coupling}.
The step retains the old witness whenever it retains the old upper
certificate. The displayed pseudocode uses exact model solves;
\cref{prop:numerical-implementation} specifies the certified numerical
replacements for its solves and emptiness tests. The query guard applies
inside every subroutine; an interrupted call retains the previous witness.
The selector uses the bounded-work implementation of
\citet{martinezRubioGuzman2026stable}, as specified in \cref{sec:implementation}.
The certified gap guarantee holds with probability at least $1-\delta$.
Thus the upper certificate $U$, phase scale $\Gamma$, cut-normal norm $G$,
descent inexactness $\eta$, and cut inexactness $\xi$ are the only quantities
passed from the analytic module to the movement argument.

\section{The abstract coupled phase}

Maintain a max-affine bundle model $m_t\le f$ on $\Q$, a lower certificate
$\ell_t=\min_\Q m_t$, a feasible upper witness $y_t$, and an upper certificate
$U_t\ge f(y_t)$.  A phase begins with scale
$\Gamma=(U_0-\ell_0)/2$.  Its body is
\begin{equation*}
  K_t=\set{u\in\Q:m_t(u)\le U_t-\Gamma}.
\end{equation*}
The bodies are nested because $m_t$ increases and $U_t$ decreases.

\newtarget{def:eta}{}
The coupled step allows two geometrically different errors.  The descent
error $\eta_t$ is vertical slack in the upper-bound decrease: it is the
amount lost from the ideal decrease $\mathcal D(G_t)$ for a specified
nonnegative descent profile $\mathcal D$.  The cut error $\xi_t$
is vertical slack in the separation at $z_t$: for cut normal $g_t$, it makes
the retained halfspace shallower by $\xi_t/\norm{g_t}_{s^\ast}$.  Thus
$\eta_t$ spends the phase's progress budget, whereas $\xi_t$ spends its cut
depth. An implementation obtains descent and cuts with some errors $\xi_t$ $\eta_t$, and we show that the rate of convergence is robust to it. 

\begin{definition}[Coupled step with qualified errors]
\label{def:coupled-step}
Fix a descent profile $\mathcal D:[0,\infty)\to[0,\infty)$.
At center $z_t\in K_t$, a coupled step returns an upper certificate
$U_{t+1}\le U_t$, a valid affine cut
$a_t(u)=b_t+\ip{g_t}{u}$, cut-normal norm
$G_t=\norm{g_t}_{s^\ast}$, descent inexactness $\eta_t\ge0$, and cut
inexactness $\xi_t\ge0$, such that
\begin{align}
  U_{t+1}
  &\le U_t-\mathcal D(G_t)+\eta_t,
  \label{eq:coupled-descent}\\
  a_t(z_t)-(U_{t+1}-\Gamma)
  &\ge\Gamma+\mathcal D(G_t)-\xi_t.
  \label{eq:coupled-cut}
\end{align}
The movement norm is $\ell_s$ and the cut normal is measured in
$\ell_{s^\ast}$.
\end{definition}

\begin{lemma}[Cut depth and inverse normal norm]
\label{lem:inverse-gradient-norm}\linktoproof{lem:inverse-gradient-norm}
Let $G_t$ be the cut-normal norm and $\xi_t$ the cut inexactness from
\cref{def:coupled-step}.  If $\xi_t\le\Gamma/2$ and the next body is nonempty,
then
\begin{equation*}
  \norm{z_{t+1}-z_t}_s\ge\frac{\Gamma}{2G_t}.
\end{equation*}
\end{lemma}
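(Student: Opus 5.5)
The plan is to combine the nesting of the bodies with the cut inequality \cref{eq:coupled-cut} and a Hölder-inequality estimate. The next body is formed after the model is updated to $m_{t+1}=\max\{m_t,a_t\}$ and the upper certificate becomes $U_{t+1}$. Since the next selector point lies in it, I would first record that
\[
  z_{t+1}\in K_{t+1}=\{u\in\Q: m_{t+1}(u)\le U_{t+1}-\Gamma\}\subseteq\{u: a_t(u)\le U_{t+1}-\Gamma\}.
\]
This holds because $a_t\le m_{t+1}$ pointwise. Nonemptiness is used only to ensure that $z_{t+1}$ exists. Consequently $a_t(z_{t+1})\le U_{t+1}-\Gamma$.

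Next, I would compare this with the value of the cut at the current center. Inequality \cref{eq:coupled-cut} gives $a_t(z_t)\ge U_{t+1}-\Gamma+\Gamma+\mathcal D(G_t)-\xi_t$. Subtracting the bound from the first step, and using $\mathcal D(G_t)\ge0$ (the descent profile is nonnegative) together with $\xi_t\le\Gamma/2$, yields
\[
  \ip{g_t}{z_t-z_{t+1}}=a_t(z_t)-a_t(z_{t+1})\ge\Gamma+\mathcal D(G_t)-\xi_t\ge\frac{\Gamma}{2}.
\]

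Then Hölder's inequality in the dual pair $(\ell_{s^\ast},\ell_s)$ gives $\ip{g_t}{z_t-z_{t+1}}\le G_t\norm{z_t-z_{t+1}}_s$, and therefore $G_t\norm{z_{t+1}-z_t}_s\ge\Gamma/2$. In particular $G_t>0$, because the left side of the previous display is positive. Dividing by $G_t$ gives the claim.

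There is no real obstacle here. The only point needing care is bookkeeping: checking that the "next body" is defined with the updated model and the new level $U_{t+1}-\Gamma$, which is exactly the level that appears in \cref{eq:coupled-cut}. I would state this convention explicitly so that the containment in the first step is justified.
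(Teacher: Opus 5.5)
Your proof is correct and follows the paper's argument. Like the paper, you place $z_{t+1}$ in the retained halfspace $\{u: a_t(u)\le U_{t+1}-\Gamma\}$, apply H\"older's inequality to the displacement, and use \cref{eq:coupled-cut} with $\mathcal D\ge 0$ and $\xi_t\le\Gamma/2$. The only differences are cosmetic: the paper computes the full distance from $z_t$ to that halfspace and treats $G_t=0$ separately as giving an empty halfspace, whereas you bound the displacement to $z_{t+1}$ directly and deduce $G_t>0$ from the positive lower bound.
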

Note that if $G_t=0$, we can stop the algorithm, so zero never occurs in an inverse-norm sum.

\begin{theorem}[Movement-to-optimization reduction]
\label{thm:master-phase}\linktoproof{thm:master-phase}
Let $1<\kappa\le2$ and $\rho>0$. Suppose a selector for nested convex
sets in $R B_p^d$ satisfies
$\sum_{t=0}^{N-1}\norm{z_{t+1}-z_t}_s
=\widetilde O(RN^{1-\rho})$.
Consider the level updates of \cref{alg:holder-main}, using coupled steps
as in \cref{def:coupled-step} with profile $\mathcal D=\Dholder$,
with a budget of $T\ge1$ coupled steps in place of the accuracy and
first-order-call stopping rules. Stop earlier only if the certified gap vanishes.
Assume that, in each phase of scale $\Gamma$, the errors satisfy
$\sum_{t=0}^{N-1}\eta_t\le\Gamma/16$ and $0\le\xi_t\le\Gamma/2$
for every nonterminal prefix of length $N$.
From an initial certified gap $O(LR^\kappa)$, the retained upper witness
after at most $T$ coupled steps has optimization error at most
\begin{equation*}
  \widetilde O_\kappa\!\left(
    \frac{LR^\kappa}{T^{\kappa-1+\kappa\rho}}
  \right).
\end{equation*}
\end{theorem}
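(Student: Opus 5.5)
The proof has two parts. First, a per-phase bound: a phase of scale $\Gamma$ has at most $N_\Gamma$ nonterminal coupled steps, with $N_\Gamma^{\kappa-1+\kappa\rho}=\widetilde O_\kappa(LR^\kappa/\Gamma)$. Second, the phases are summed geometrically.

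\emph{Phase termination and gap contraction.} A phase ends in one of two ways. Either $U$ drops below $U^{\mathrm{start}}-\Gamma/4$, or the body $K$ becomes empty, which certifies $\min_\Q m\ge U-\Gamma$. In the first case the new gap is at most $2\Gamma-\Gamma/4$. In the second the lower bound rises to at least $U-\Gamma$, so the gap is at most $\Gamma$. Either way the gap shrinks by a constant factor, say $7/8$. Hence the number of phases needed to go from $O(LR^\kappa)$ down to $\varepsilon$ is $O(\log(LR^\kappa/\varepsilon))$, and this is absorbed into $\widetilde O$.

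\emph{Descent budget inside a phase.} Consider a nonterminal prefix of length $N$. Throughout it $U_t>U_0-\Gamma/4$. Summing \cref{eq:coupled-descent} and using $\sum\eta_t\le\Gamma/16$ gives
\begin{equation*}
  \sum_{t<N}\Dholder(G_t)\le U_0-U_N+\Gamma/16\le \Gamma/4+\Gamma/16 .
\end{equation*}
(Alternatively, use $U_N\ge f^\star\ge\ell_0=U_0-2\Gamma$, which gives $\le 2\Gamma+\Gamma/16$.) Therefore $\sum_{t<N}G_t^{m_\kappa}\lesssim_\kappa L^{1/(\kappa-1)}\Gamma$.

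\emph{Movement budget inside a phase.} The bodies $K_t$ are nested, as noted at the start of the section, and they lie in $RB_p^d$. Every transition before termination has a nonempty next body. Since $\xi_t\le\Gamma/2$, \cref{lem:inverse-gradient-norm} yields $\norm{z_{t+1}-z_t}_s\ge\Gamma/(2G_t)$. Summing and invoking the selector hypothesis (with the selector restarted each phase) gives
\begin{equation*}
  \Gamma\sum_{t<N}1/G_t=\widetilde O(RN^{1-\rho}).
\end{equation*}
If some $G_t=0$, the point is optimal and the gap vanishes, so that case is excluded.

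\emph{Combining the two budgets.} Apply the harmonic--power mean inequality \cref{eq:harmonic-power-mean}:
\begin{equation*}
  N\Big(\sum 1/G_t\Big)^{-1}\le\Big(N^{-1}\sum G_t^{m_\kappa}\Big)^{1/m_\kappa}.
\end{equation*}
Substitute the two budgets. This gives
\begin{equation*}
  \frac{N\Gamma}{RN^{1-\rho}}\lesssim\big(L^{1/(\kappa-1)}\Gamma/N\big)^{(\kappa-1)/\kappa}.
\end{equation*}
Raising both sides to the power $\kappa$ and rearranging, as in \cref{sec:warmup}, yields $N^{\kappa-1+\kappa\rho}\le\widetilde O_\kappa(LR^\kappa/\Gamma)$. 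This is the bound $N_\Gamma$ claimed above.

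\emph{Summing over phases.} Let the final gap be $\varepsilon_T$. The phase scales are geometric with ratio at most $7/8$, and every phase except the last has scale $\Gamma\gtrsim\varepsilon_T$. Write $\alpha=\kappa-1+\kappa\rho>0$. Then
\begin{equation*}
  T\ge\sum_{\text{phases}}N_\Gamma \quad\text{and}\quad N_\Gamma\lesssim (LR^\kappa/\Gamma)^{1/\alpha}.
\end{equation*}
Because $1/\alpha>0$, the terms grow geometrically as $\Gamma$ decreases, so the sum is dominated by the smallest scale, up to a constant or logarithmic factor. Hence, if after $T$ steps the gap were still larger than $C\,LR^\kappa T^{-\alpha}$ times polylogarithmic factors, the phases completed so far would already have used more than $T$ steps, a contradiction. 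The retained witness $y$ satisfies $f(y)-f^\star\le U-\ell$, which gives the stated rate.

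\emph{Main obstacle.} The delicate point is the accounting of phase lengths. The selector bound is horizon-dependent, and the $\widetilde O$ must hide only logarithms in $N$ and not a dependence on $\Gamma$. I would fix $N_\Gamma$ as the smallest integer for which the per-phase contradiction holds. This has two consequences. A phase cannot reach transition $N_\Gamma+1$, because the inequality would fail at that length. A phase truncated by the budget $T$ leaves the current gap at most $2\Gamma$, and since the preceding completed phases already account for the steps, this gives the same bound. I also need the error conditions to hold for every nonterminal prefix, which they do by hypothesis.
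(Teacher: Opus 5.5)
Your proposal is correct and follows the paper's proof essentially step by step: the descent budget bounds $\sum_{t<N}G_t^{m_\kappa}$, the inverse-norm movement budget bounds $\sum_{t<N}1/G_t$, the gap contracts by $7/8$ or $1/2$, and the geometric phase sum is dominated by the last phase; your harmonic--power-mean step is also exactly equivalent to the paper's H\"older split $N\le(\sum_{t<N}G_t^{m_\kappa})^{1/(m_\kappa+1)}(\sum_{t<N}1/G_t)^{m_\kappa/(m_\kappa+1)}$. One slip in your final paragraph is that the inequalities are written backwards: the steps used satisfy $T\le\sum_{\text{phases}}(N_\Gamma+1)$, and a large remaining gap would make every phase so far short, so \emph{fewer} than $T$ steps could have been taken, which is the contradiction.
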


\section{Module I: a global minimizer lies in the auxiliary ball}

Assume in this section that $f:\R^d\to\R$ and that a global minimizer
$x^\ast$ satisfies $x^\ast\in\Q$.  Thus $\nabla f(x^\ast)=0$, and a value
at an infeasible point is still a valid upper bound on $f(x^\ast)$.

Given a current upper witness $y$ and movement center $z$, minimize $f$ on
the segment $[y,z]$:
\begin{equation*}
  x\in\argmin_{w\in[y,z]}f(w),
  \qquad g=\nabla f(x),
  \qquad G=\norm{g}_{q^\ast}.
\end{equation*}
Set
\begin{equation}\label{eq:ambient-step}
  y^+=x-(G/L)^{1/(\kappa-1)}v_q(g),
  \qquad
  U^+=f(x)-\Dholder(G),
\end{equation}
and use the tangent
\begin{equation*}
  a(u)=f(x)+\ip{g}{u-x}.
\end{equation*}

\begin{lemma}[Exact ambient coupling]
\label{lem:ambient-coupling}\linktoproof{lem:ambient-coupling}
Let $U$ be the current upper certificate, $G$ the gradient norm in
\cref{eq:ambient-step}, and $\Gamma$ the phase scale.  Then
\begin{equation*}
  f(y^+)\le U^+\le U-\Dholder(G),
  \qquad
  a(z)-(U^+-\Gamma)\ge\Gamma+\Dholder(G).
\end{equation*}
Thus the ambient module satisfies \cref{def:coupled-step}
with $\mathcal D=\Dholder$ and zero descent
and cut errors.
\end{lemma}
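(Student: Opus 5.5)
The plan is to verify the three claims directly from the H\"older descent inequality \cref{eq:holder-descent-lemma}, convexity, and first-order optimality of the segment line search. Throughout, $U$ is the current upper certificate with $f(y)\le U$. The minimizer $x$ exists because $f$ is continuous on the compact segment $[y,z]$. Movement is measured with $s=q$, so the cut-normal norm is $G=\norm{g}_{q^\ast}$, matching \cref{def:coupled-step}.

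\emph{Descent.} I apply \cref{eq:holder-descent-lemma} at $u=x$ and $v=y^+=x-a\,v_q(g)$, with $a=(G/L)^{1/(\kappa-1)}$. Since $\norm{v_q(g)}_q=1$ and $\ip{g}{v_q(g)}=G$, this gives
\[
f(y^+)\le f(x)-aG+\tfrac L\kappa a^\kappa .
\]
Substituting $a$, both terms are multiples of $L^{-1/(\kappa-1)}G^{\kappa/(\kappa-1)}$. The bound collapses to $f(x)-\bigl(1-\tfrac1\kappa\bigr)L^{-1/(\kappa-1)}G^{m_\kappa}=f(x)-\Dholder(G)=U^+$, which is $f(y^+)\le U^+$. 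Because $y\in[y,z]$ and $x$ minimizes $f$ on the segment, $f(x)\le f(y)\le U$. Hence $U^+\le U-\Dholder(G)$, which is \cref{eq:coupled-descent} with $\eta=0$; it also gives $U^+\le U$.

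\emph{Cut.} The tangent $a$ is a valid cut, $a\le f$ on $\R^d$, by convexity of $f$. For the depth, I use first-order optimality of $x$ on $[y,z]$. If $x=z$, then $\ip{g}{z-x}=0$. Otherwise $x+\theta(z-x)\in[y,z]$ for small $\theta>0$, so the one-sided directional derivative $\ip{g}{z-x}$ is nonnegative. In both cases
\[
a(z)=f(x)+\ip{g}{z-x}\ge f(x)=U^++\Dholder(G).
\]
Subtracting $U^+-\Gamma$ from both sides yields $a(z)-(U^+-\Gamma)\ge \Gamma+\Dholder(G)$, i.e.\ \cref{eq:coupled-cut} with $\xi=0$.

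\emph{Main obstacle.} The only delicate point is the sign $\ip{g}{z-x}\ge0$, which needs the exact segment minimizer and differentiability of $f$ at $x$; both are assumed in this section. This is precisely where later sections incur the errors $\eta,\xi$ from inexact line searches. Everything else is the one-line optimization of the H\"older upper model and the definition of $\Dholder$.
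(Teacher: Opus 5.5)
Your proposal is correct and follows essentially the same route as the paper's proof. The H\"older descent bound along $-v_q(g)$ with step $(G/L)^{1/(\kappa-1)}$ gives $f(y^+)\le U^+$, and the two segment-minimality facts $f(x)\le f(y)$ and $\langle g, z-x\rangle\ge 0$ give the descent and cut inequalities; your directional-derivative justification of the second fact simply spells out what the paper leaves implicit.
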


\begin{theorem}[Ambient rate with exact line search]
\label{thm:ambient-rate}\linktoproof{thm:ambient-rate}
Let $p<\min\{q,2\}$, let $f:\R^d\to\R$ be convex and differentiable and satisfy
\cref{eq:holder-smoothness}, and suppose a global minimizer obeys
$\norm{x^\ast}_p\le R$.  Use $R B_p^d$ only for the bundle lower model and
the movement selector, and grant the segment-minimization oracle used
in \cref{eq:ambient-step}.  Then after $T$ coupled outer steps the ambient
module returns a point $y_T\in\R^d$, not necessarily in $R B_p^d$, with
\begin{equation*}
  f(y_T)-f(x^\ast)
  \le
  \widetilde O_{\kappa,p,q}\!\left(
    \frac{LR^\kappa}{T^{\kappa(1+\rho_{p,q})-1}}
  \right).
\end{equation*}
\end{theorem}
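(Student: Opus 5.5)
The plan is to obtain \cref{thm:ambient-rate} as a direct instance of \cref{thm:master-phase}, with $s=q$, $\rho=\rho_{p,q}$ and profile $\mathcal D=\Dholder$. The work then reduces to three checks: that the ambient module is a legitimate coupled step in the sense of \cref{def:coupled-step}, that the level bookkeeping of \cref{alg:holder-main} stays valid when only $x^\ast$ (and not the iterates) lies in $R B_p^d$, and that the initial certified gap is $O(LR^\kappa)$.

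\textbf{Coupled step.} \cref{lem:ambient-coupling} already gives \cref{eq:coupled-descent} and \cref{eq:coupled-cut} with $\eta_t=\xi_t=0$. The error hypotheses $\sum_t\eta_t\le\Gamma/16$ and $\xi_t\le\Gamma/2$ of \cref{thm:master-phase} therefore hold trivially. The movement input is the selector bound of \citet{martinezRubioGuzman2026stable} recalled in \cref{sec:warmup}: for $p<q$ it gives $\sum_{t<N}\norm{z_{t+1}-z_t}_q=\widetilde O_{p,q}(RN^{1-\rho_{p,q}})$ on nested bodies inside $R B_p^d$, which is exactly the hypothesis of the theorem with $s=q$.

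\textbf{Certificates.} The tangents $a_t$ are global underestimators of $f$, so the model satisfies $m_t\le f$ everywhere. Hence $\ell_t=\min_{\Q}m_t\le m_t(x^\ast)\le f(x^\ast)$, because $x^\ast\in\Q$. On the other side, $U_t\ge f(y_t)\ge f(x^\ast)$ holds by \cref{lem:ambient-coupling} even when $y_t\notin\Q$. The gap $U_t-\ell_t$ therefore still certifies $f(y_t)-f(x^\ast)$.

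The only place the abstract argument might use feasibility of $y_t$ is the emptiness step. There, if $K=\varnothing$, then $m>U-\Gamma$ on $\Q$, so the lower certificate rises above $U-\Gamma$ and the gap halves. This is a statement about $\Q$ only, so it goes through. Nestedness of the bodies $K_t$ also uses only monotonicity of $m_t$ and $U_t$. I would note explicitly that the proof of \cref{thm:master-phase} uses nothing else about $y_t$; this is the step needing care.

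\textbf{Initial gap.} This is the step I expect to be the main (if mild) obstacle, since the iterates are unconstrained and no Lipschitz bound is assumed. I would take $y_0=0$ and query $g_0=\nabla f(0)$. Then $m_0=f(0)+\ip{g_0}{\cdot}$ and $\ell_0=f(0)-R\norm{g_0}_{p^\ast}$. For the upper side, use one descent step $U_0=f(0)-\Dholder(G_0)$ with $G_0=\norm{g_0}_{q^\ast}$.

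The gap is then $R\norm{g_0}_{p^\ast}-\Dholder(G_0)$. Since $\norm{g_0}_{p^\ast}\le\norm{g_0}_{q^\ast}$ for $p\le q$, this is at most $RG_0-c_\kappa L^{-1/(\kappa-1)}G_0^{\kappa/(\kappa-1)}$. Maximizing over $G_0$ gives $O_\kappa(LR^\kappa)$ by Young's inequality. Alternatively, one can use $\nabla f(x^\ast)=0$ and smoothness to get $G_0\le L\norm{x^\ast}_q^{\kappa-1}\le LR^{\kappa-1}$, so the gap is at most $RG_0\le LR^\kappa$.

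\textbf{Conclusion.} Substituting into \cref{thm:master-phase} gives error $\widetilde O_\kappa(LR^\kappa/T^{\kappa-1+\kappa\rho_{p,q}})$. Since $\kappa-1+\kappa\rho_{p,q}=\kappa(1+\rho_{p,q})-1$, this is the claimed rate. Finally, the condition $T\le d$ and the restriction $p<\min\{q,2\}$ should be carried over from the applicability range of the selector bound.
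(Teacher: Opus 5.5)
Your proposal is correct and matches the paper's proof. The paper applies \cref{lem:ambient-coupling} (with zero errors) together with \cref{thm:master-phase} at exponent $\rho_{p,q}$, and gets the initial gap by querying at $0$ and using $\nabla f(x^\ast)=0$ to obtain $\|\nabla f(0)\|_{q^\ast}\le LR^{\kappa-1}$, which is exactly your second initialization argument; your check that $\ell_t\le f(x^\ast)\le f(y_t)\le U_t$ holds for infeasible witnesses just spells out the paper's closing remark.

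Your Young's-inequality variant is also valid and does not even need $\nabla f(x^\ast)=0$, provided the witness is the descended point rather than $0$. Finally, the $T\le d$ caveat is not part of this statement and is not needed, since the imported movement bound is stated without it.
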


The theorem explains the cleanest use of an externally supplied radius
bound.  The auxiliary ball restricts the comparator and makes movement
finite; it does not restrict the descent path or output.  This argument is
valid only when the optimization problem itself is unconstrained.  An
approximate one-dimensional search can replace the ideal oracle by assigning
its value and sign errors to the budgets in \cref{def:coupled-step}; the
feasible construction below gives the fully specified polynomial algorithm
used in the main theorem.

\section{Why constrained feasibility requires a new module}

For $\min_{\Q}f$, the point $y^+$ in \cref{eq:ambient-step} can be infeasible,
and the constrained minimizer can have nonzero gradient.  Projecting $y^+$
back to $\Q$ does not preserve either the H\"older decrease or the tangent-cut
alignment.  A direct $\kappa$-power proximal envelope suggests the same descent
profile, but we do not establish the required linear-time proximal-solve
bound for that construction.

We instead use the ``inexact gradient trick'' at the current phase accuracy, a classical technique from \citet{devolder2014first}.
\newtarget{def:mean-width}{}
For a tolerance $\omega>0$, define
\begin{equation}\label{eq:H-omega}
  H_\omega
  \defi
  \begin{cases}
    \displaystyle
    \left(\frac{2-\kappa}{2\kappa}\right)^{(2-\kappa)/\kappa}
    L^{2/\kappa}\omega^{-(2-\kappa)/\kappa},&1<\kappa<2,\\[3mm]
    L,&\kappa=2.
  \end{cases}
\end{equation}

\begin{lemma}[Inexact gradient trick]
\label{lem:quadraticization}\linktoproof{lem:quadraticization}
For every $u,v\in\Q$,
\begin{equation*}
  f(v)
  \le f(u)+\ip{\nabla f(u)}{v-u}
  +\frac{H_\omega}{2}\norm{v-u}_q^2+\omega.
\end{equation*}
\end{lemma}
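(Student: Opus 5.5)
Start from the Hölder descent inequality \cref{eq:holder-descent-lemma} and absorb the $\kappa$-power remainder into a quadratic term plus a constant, pointwise in the distance $r=\norm{v-u}_q$. That is, the claim reduces to the scalar inequality
\begin{equation*}
  \frac{L}{\kappa}r^\kappa\le\frac{H_\omega}{2}r^2+\omega\qquad\text{for all } r\ge0 .
\end{equation*}
Substituting it into \cref{eq:holder-descent-lemma} with $r=\norm{v-u}_q$ proves the lemma for all $u,v$. Restricting to $\Q$ is harmless, since $f$ is differentiable on a neighborhood of $\Q$ and $\Q$ is convex, so the segment $[u,v]$ used to derive \cref{eq:holder-descent-lemma} stays in the domain.

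For $\kappa=2$ the scalar inequality is immediate with $H_\omega=L$ and slack $\omega\ge0$. For $1<\kappa<2$, apply weighted Young's inequality to split $r^\kappa$ between $r^2$ and $1$. With exponents $a=2/\kappa$ and $b=2/(2-\kappa)$, which satisfy $1/a+1/b=1$, write
\begin{equation*}
  \frac{L}{\kappa}r^\kappa=(\lambda r^\kappa)\cdot\frac{L}{\kappa\lambda}\le\frac{\kappa}{2}\lambda^{2/\kappa}r^2+\frac{2-\kappa}{2}\left(\frac{L}{\kappa\lambda}\right)^{2/(2-\kappa)} .
\end{equation*}
Choose $\lambda$ so that the constant term equals $\omega$. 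Then read off the coefficient of $r^2$, namely $\kappa\lambda^{2/\kappa}$, and set it equal to $H_\omega$.

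Equivalently, and as a check on the constant, maximize $\phi(r)=\frac{L}{\kappa}r^\kappa-\frac{H}{2}r^2$ over $r\ge0$. The maximizer is $r_\ast=(L/H)^{1/(2-\kappa)}$, which gives
\begin{equation*}
  \max_{r\ge0}\phi(r)=\frac{2-\kappa}{2\kappa}\,L^{2/(2-\kappa)}H^{-\kappa/(2-\kappa)} .
\end{equation*}
Requiring this maximum to be at most $\omega$ and solving for $H$ gives
\begin{equation*}
  H\ge\left(\frac{2-\kappa}{2\kappa}\right)^{(2-\kappa)/\kappa}L^{2/\kappa}\omega^{-(2-\kappa)/\kappa},
\end{equation*}
which is exactly $H_\omega$ in \cref{eq:H-omega}. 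This confirms the chosen constant suffices, and the lemma follows.

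The only real obstacle is this exponent bookkeeping in the last step: confirming that the constant in \cref{eq:H-omega} is exactly the optimal (or a sufficient) one. The maximization route settles it directly.
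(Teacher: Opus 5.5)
Your proposal is correct and follows the paper's proof: reduce to the scalar inequality $\frac{L}{\kappa}r^\kappa\le\frac{H_\omega}{2}r^2+\omega$, maximize $\frac{L}{\kappa}r^\kappa-\frac{H}{2}r^2$ at $r_\ast=(L/H)^{1/(2-\kappa)}$, and note that $H_\omega$ makes this maximum exactly $\omega$, with $\kappa=2$ being the usual descent lemma. Your Young's-inequality variant gives the same constant, so it is a repackaging of the same computation; one small slip is that the $r^2$ coefficient there is $\frac{\kappa}{2}\lambda^{2/\kappa}$, which you match to $H_\omega/2$.
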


For the feasible construction, use $s=\min\{q,2\}$. %
For a center $c\in\Q$ and coefficient $A>0$, define the quadratic envelope
\begin{equation*}
  \Menv(c)
  \defi
  \min_{u\in\Q}\set{f(u)+\frac A2\norm{u-c}_s^2}.
\end{equation*}
Let $J_s(w)=\nabla(\norm{w}_s^2/2)$ denote the duality map.

\newtarget{def:threshold}{}
\begin{definition}[Certified envelope evaluation]
\label{def:envelope-certificate}
At center $c$, a bundle call with tolerance $\tau$ returns a convex max-affine model
$\model\le f$ and points $x,y\in\Q$ such that
\begin{align}
  x&\in\argmin_{u\in\Q}
    \set{\model(u)+\frac A2\norm{u-c}_s^2},
  \label{eq:bundle-lower-point}\\
  f(y)+\frac A2\norm{y-c}_s^2
  &\le
  \model(x)+\frac A2\norm{x-c}_s^2+\tau.
  \label{eq:bundle-envelope-gap}
\end{align}
The aggregate residual and cut are
\begin{equation}\label{eq:aggregate-cut}
  g=A J_s(c-x),
  \qquad
  a(v)=\model(x)+\ip{g}{v-x}.
\end{equation}
\end{definition}

\begin{proposition}[Certified bundle solve]
\label{prop:bundle-solve}\linktoproof{prop:bundle-solve}
Let $1<s\le2$, let $\Q\subseteq R B_s^d$ be compact and convex, and let
$f$ be convex and differentiable on a neighborhood of $\Q$.
Fix $c\in\Q$, $A,\tau>0$, and $0<\omega\le\tau/2$. Suppose that
\begin{equation*}
  f(v)\le f(u)+\ip{\nabla f(u)}{v-u}
    +\frac{H_\omega}{2}\norm{v-u}_s^2+\omega
  \qquad(u,v\in\Q).
\end{equation*}
\newtarget{def:mu}{}
    Set $\mu=A(s-1)$. Repeating \cref{eq:bundle-lower-point} yields \cref{eq:bundle-envelope-gap} using
\begin{equation}\label{eq:bundle-complexity}
  O\!\left(
    \left(1+\frac{H_\omega}{\mu}\right)
    \log\left(2+\frac{H_\omega R^2}{\tau}\right)
  \right)
\end{equation}
first-order calls. %
\end{proposition}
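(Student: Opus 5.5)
The plan is to run a proximal bundle iteration on the strongly convex subproblem $\phi(u)=f(u)+\frac A2\norm{u-c}_s^2$ over $\Q$, and to show that its certified gap contracts geometrically. At step $k$ we hold a max-affine model $\model_k\le f$, generated by tangents at previously queried points. We set
\[
\Psi_k(u)=\model_k(u)+\frac A2\norm{u-c}_s^2,
\]
take $x_k\in\argmin_\Q\Psi_k$ as in \cref{eq:bundle-lower-point}, query $f(x_k)$ and $\nabla f(x_k)$, and add the tangent at $x_k$ to get $\model_{k+1}$. The upper witness $y_k$ is the best point seen so far in terms of $\phi$. The quantity we track is
\[
\Delta_k=\phi(y_k)-\Psi_k(x_k).
\]
It is nonnegative, since $\Psi_k\le\phi$, and once $\Delta_k\le\tau$ it gives exactly \cref{eq:bundle-envelope-gap}.

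The first ingredient is uniform convexity. For $1<s\le2$, the function $\frac12\norm{\cdot}_s^2$ is $(s-1)$-strongly convex with respect to $\norm{\cdot}_s$. Hence each $\Psi_k$ is $\mu$-strongly convex with $\mu=A(s-1)$. Since $x_k$ minimizes $\Psi_k$ over $\Q$, the first-order optimality condition gives
\[
\Psi_{k+1}(u)\ge\Psi_k(u)\ge\Psi_k(x_k)+\frac\mu2\norm{u-x_k}_s^2\qquad(u\in\Q).
\]
The new cut also gives
\[
\Psi_{k+1}(u)\ge f(x_k)+\ip{\nabla f(x_k)}{u-x_k}+\frac A2\norm{u-c}_s^2.
\]
A max dominates any convex combination, so for $\lambda\in[0,1]$ we lower bound $\Psi_{k+1}$ by $\lambda$ times the second bound plus $(1-\lambda)$ times the first.

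The second ingredient is the hypothesized inexact descent inequality with constant $H_\omega$ and slack $\omega$. It lets us compare this combination with an upper bound on $\phi$ at a trial point. The trial point is taken along the segment from $x_k$ towards the minimizer of the linearized upper model, so it stays in $\Q$ by convexity.

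Choosing $\lambda\approx\mu/(\mu+H_\omega)$ should yield a recursion of the form
\[
\Delta_{k+1}\le\Bigl(1-\frac{c_0\mu}{\mu+H_\omega}\Bigr)\Delta_k+\omega .
\]
This is the standard linear-rate argument for proximal bundle methods under smoothness, where the condition number is $H_\omega/\mu$ in the $\ell_s$ geometry.

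The contraction step is the main obstacle. Because the regularizer is only uniformly convex in a non-Euclidean norm, the $\lambda$-combination must be handled using the strong convexity of $\frac12\norm{\cdot}_s^2$ along segments rather than a Pythagorean identity. I would first try to prove the recursion with $\Delta_k$ replaced by the gap $\phi(x_k)-\Psi_k(x_k)$ at the query point, which is easier to relate to the new cut. The bound for $y_k$ would then follow by monotonicity of the best value.

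To conclude, we first bound the initial gap. Since $\Q\subseteq RB_s^d$, the smoothness inequality applied at the initial query point gives $\Delta_0\lesssim H_\omega R^2+\omega$. Unrolling the recursion shows that after
\[
k=O\Bigl(\bigl(1+H_\omega/\mu\bigr)\log\bigl(2+H_\omega R^2/\tau\bigr)\Bigr)
\]
steps the contracted term is at most $\tau/2$. The geometric accumulation of $\omega$ contributes $O\bigl((1+H_\omega/\mu)\,\omega\bigr)$, and absorbing this into the remaining $\tau/2$ is where $\omega\le\tau/2$ is used; if the constant does not close directly, I would run the recursion towards $\tau/4$. At the final step the certificate \cref{eq:bundle-envelope-gap} holds, and the returned model and points satisfy \cref{eq:bundle-lower-point}, which proves \cref{eq:bundle-complexity}.
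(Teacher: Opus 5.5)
Your algorithm, the certified gap $\Delta_k=\phi(y_k)-\Psi_k(x_k)$ you track, and the initial bound $\Delta_0\lesssim H_{\omega}R^2+\omega$ all match the paper, and the contraction you aim for is true. The gap is in how you propose to prove it.

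\textbf{The convex-combination route controls the wrong gap.} Take $\lambda\le\mu/(\mu+H_{\omega})$ in your convex combination and use $f(x_k)+\langle\nabla f(x_k),u-x_k\rangle\ge f(u)-\frac{H_{\omega}}{2}\|u-x_k\|_s^2-\omega$. This gives the pointwise bound $\Psi_{k+1}(u)\ge\lambda\phi(u)+(1-\lambda)\Psi_k(x_k)-\lambda\omega$ on $\Q$. Hence $\min_{\Q}\phi-\Psi_k(x_k)$ decays geometrically.

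That quantity compares the lower bound with the \emph{unknown} optimum. It gives no control of $\phi(y_{k+1})$: a bound on $\phi$ at a trial point that is never queried can only bound $\min_{\Q}\phi$ from above. It never bounds the best \emph{queried} value, so the certified gap is not reached this way.

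\textbf{The missing step.} Use the only new queried point, $x_{k+1}$. Because $m_{k+1}$ contains the tangent at $x_k$,
\[
\Delta_{k+1}\le f(x_{k+1})-m_{k+1}(x_{k+1})\le\frac{H_{\omega}}{2}\|x_{k+1}-x_k\|_s^2+\omega .
\]
Your first inequality, evaluated at $u=x_{k+1}$, gives $\frac{\mu}{2}\|x_{k+1}-x_k\|_s^2\le\Psi_{k+1}(x_{k+1})-\Psi_k(x_k)$. Since the best value $\phi(y_k)$ is nonincreasing, $\Psi_{k+1}(x_{k+1})-\Psi_k(x_k)\le\Delta_k-\Delta_{k+1}$. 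With $a=H_{\omega}/\mu$ this closes as $(1+a)\Delta_{k+1}\le a\Delta_k+\omega$.

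That is essentially the paper's whole proof. It needs no convex combination, no trial point, and no segment geometry of $\|\cdot\|_s$ beyond the strong-convexity inequality at the minimizer, which you already wrote down.

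\textbf{The error accounting does not close as you set it up.} A recursion $\Delta_{k+1}\le(1-\theta)\Delta_k+\omega$ with $\theta\asymp\mu/(\mu+H_{\omega})$ only drives $\Delta_k$ down to a floor of order $\omega/\theta\asymp(1+H_{\omega}/\mu)\,\omega$. That floor does not depend on the number of iterations. The hypothesis $\omega\le\tau/2$ does not put it below $\tau$ when $H_{\omega}/\mu$ is large, and aiming at $\tau/4$ changes nothing.

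The slack must enter scaled by the contraction rate, as in $\Delta_{k+1}\le\frac{a}{1+a}\Delta_k+\frac{\omega}{1+a}$, whose floor is $\omega\le\tau/2$. This is what makes $O\bigl((1+a)\log(2+H_{\omega}R^2/\tau)\bigr)$ calls suffice.

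\textbf{The fallback is worse.} Tracking the query-point gap $\gamma_k=\phi(x_k)-\Psi_k(x_k)$ loses exactly the monotonicity the telescoping uses, because query-point values are not monotone. The same estimates then only give $\gamma_{k+1}\le a\gamma_k+\omega$, which is not a contraction when $a\ge1$.
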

Exact model solves are used above. We note that \cref{prop:numerical-implementation} gives the
same order of calls with certified approximate solves when $H_\omega/\mu=O_s(1)$.
The proof contracts the global model gap directly. At the coefficient
used below, $H_\omega/\mu=O_s(1)$, so this nonaccelerated inner solver
already has logarithmic oracle cost.

\section{The feasible envelope coupling}

Let $y^{\mathrm{old}}$ be the current feasible upper witness and $z$ the movement
center.  We evaluate the envelope along
\begin{equation*}
  c_\lambda=y^{\mathrm{old}}+\lambda(z-y^{\mathrm{old}}),
  \qquad 0\le\lambda\le1.
\end{equation*}
At each queried $c_\lambda$, compute the certified residual $g_\lambda$ and
the scalar
\begin{equation*}
  \psi_\lambda=\ip{g_\lambda}{z-y^{\mathrm{old}}}.
\end{equation*}

\begin{lemma}[Certified envelope line search]
\label{lem:envelope-line-search}\linktoproof{lem:envelope-line-search}
Using envelope evaluations with bundle tolerance $\tau$, sign bisection
returns a center $c\in[y^{\mathrm{old}},z]$ and certified residual $g$ such that
\begin{equation*}
  \Menv(c)\le\Menv(y^{\mathrm{old}})+\eta+\tau,
  \qquad
  \ip{g}{z-c}\ge0.
\end{equation*}
The number of envelope evaluations is
$O(\log(2+AR^2/\eta))$.
\end{lemma}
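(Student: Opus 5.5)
We need to prove Lemma (certified envelope line search). The idea: the function $\phi(\lambda)=\Menv(c_\lambda)$ is convex in $\lambda$ (the envelope $\Menv$ is convex, being an infimal convolution of convex functions), with derivative $\phi'(\lambda)=\ip{\nabla\Menv(c_\lambda)}{z-y^{\mathrm{old}}}$. The certified residual $g_\lambda=AJ_s(c_\lambda-x_\lambda)$ approximates $\nabla \Menv(c_\lambda)$, so $\psi_\lambda$ is an approximate derivative. We treat $\psi_\lambda$ as a sign oracle for bisection.

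The first step is to dispose of the endpoints. If $\psi_1\ge0$, return $c=z$ with $g=g_1$: then $\ip{g}{z-c}=0\ge0$. We still need the value bound, so instead we argue via the following observation. By the definition of $a$ in \cref{eq:aggregate-cut}, $\model(x)+\frac A2\norm{x-c}_s^2$ is a minimum of a strongly convex model, so for every $c'$ we have $\Menv(c')\ge \model(x)+\frac A2\norm{x-c}_s^2+\ip{g}{c'-c}-(\text{small})$. Concretely, I would show: the inexact-oracle pair $(\model(x)+\frac A2\norm{x-c}^2_s,\; g)$ is a $\tau$-inexact first-order oracle for $\Menv$ at $c$, namely
\[
\Menv(c)-\tau\le \model(x)+\tfrac A2\norm{x-c}_s^2,\qquad \Menv(c')\ge \model(x)+\tfrac A2\norm{x-c}_s^2+\ip{g}{c'-c}\quad\forall c'.
\]
The first is \cref{eq:bundle-envelope-gap} plus $\model\le f$; the second follows since $u\mapsto \model(u)+\frac A2\norm{u-c'}_s^2 \ge \model(u)+\frac A2\norm{u-c}_s^2+\ip{AJ_s(c-u)}{c'-c}$ by convexity of $\norm{\cdot}_s^2/2$, then the first-order optimality of $x$ (the model-prox minimizer) gives the lower bound $\min_u$ of the right side $\ge$ value at $x$ plus $\ip{g}{c'-c}$. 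This is a Devolder-style inexact oracle and is the main technical step; I expect the delicate part to be the exchange of the minimum with the linear term, which I would handle by noting $\ip{AJ_s(c-u)}{c'-c}$ is a subgradient of a jointly convex function $(u,c)\mapsto \model(u)+\frac A2\norm{u-c}_s^2$, and $(0,g)$-type optimality at $(x,c)$ over $\Q\times\{c\}$.

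With this oracle, bisection: if $\psi_1\ge 0$ return $\lambda=1$; otherwise, if $\psi_0<0$... note $\psi_0<0$ means moving toward $z$ decreases the lower linear bound. Maintain an interval $[\lambda_-,\lambda_+]$ with $\psi_{\lambda_-}< 0\le$? Simpler invariant: keep $\lambda_+$ with $\psi_{\lambda_+}\ge0$ (initially none) — so instead take $\lambda_-=0$ case: if $\psi_0\ge0$ return $c=y^{\mathrm{old}}$, giving value bound trivially ($\eta=0$ slack) and $\ip{g}{z-c}=\psi_0\ge0$. Otherwise $\psi_0<0$ and $\psi_1<0$ cannot both hold while returning... if $\psi_1<0$ then at $\lambda=1$, $\ip g{z-c}$ is... we need $\ge0$, so when $\psi_1<0$ we bisect on $[0,1]$ keeping $\psi_{\lambda_-}\ge0$? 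Correct invariant: $\psi_{\lambda_-}<0$, $\psi_{\lambda_+}\ge 0$ exists only if some point is nonnegative; if all sampled are negative we use convexity. I would set it up as: return any $\lambda$ with $\psi_\lambda\ge0$ whose value is controlled, and use the oracle inequality $\Menv(c_{\lambda'})\ge \text{val}_\lambda+(\lambda'-\lambda)\psi_\lambda$ with $\lambda'=0$ to show that at any $\lambda$ with $\psi_\lambda\ge 0$... that gives $\Menv(y^{\mathrm{old}})\ge \Menv(c_\lambda)-\tau-\lambda\psi_\lambda$, the wrong direction. So the value bound must come from the last negative point: if $\psi_{\lambda_-}<0$ then for $\lambda\ge\lambda_-$, $\Menv(c_\lambda)\ge\Menv(c_{\lambda_-})-\tau$, i.e.\ $\lambda_-$ is near-minimal on $[\lambda_-,1]$ and also by convexity $\Menv$ decreasing-ish from $0$. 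Then when the bracket $[\lambda_-,\lambda_+]$ has length $\le \eta/(AR^2\cdot\text{const})$, Lipschitzness of $\Menv$ along the segment (its gradient is bounded by $A\cdot\mathrm{diam}\lesssim AR$, and $\norm{z-y^{\mathrm{old}}}\lesssim R$) gives $\Menv(c_{\lambda_+})\le \Menv(c_{\lambda_-})+\eta$, and a convexity/monotonicity argument along the chain of negative points gives $\Menv(c_{\lambda_-})\le\Menv(y^{\mathrm{old}})+\tau$. Returning $c=c_{\lambda_+}$, $g=g_{\lambda_+}$ gives both claims, with $O(\log(2+AR^2/\eta))$ halvings. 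The case $\psi_1<0$ throughout is handled by returning $\lambda=1$ directly: $\ip{g}{z-c}=0$ trivially, and the value bound follows as above.
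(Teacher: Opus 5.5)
Your architecture matches the paper's: a $\tau$-subgradient inequality $M_A(v)\ge M_A(c)+\langle g,v-c\rangle-\tau$ for the certified residual, then sign bisection on $\psi_\lambda$, the Lipschitz bound $2AR\|c'-c\|_s$ on the final bracket, and returning the right endpoint. Your joint-convexity route to that inequality is the paper's Danskin step for the model envelope $\underline M(v)=\min_{u\in\mathcal Q}\{m(u)+\frac A2\|u-v\|_s^2\}$, combined with $\underline M\le M_A$ and $M_A(c)\le \underline M(c)+\tau$. Your first attempt does not work as written, because the linear term $\langle AJ_s(c-u),c'-c\rangle$ still depends on $u$ and cannot be pulled out of the minimum.

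The gap is the value bound $M_A(c_{\lambda_-})\le M_A(y^{\mathrm{old}})+\tau$. You claim that $\psi_{\lambda_-}<0$ gives $M_A(c_\lambda)\ge M_A(c_{\lambda_-})-\tau$ for $\lambda\ge\lambda_-$. This is backwards: the oracle gives $M_A(c_\lambda)\ge M_A(c_{\lambda_-})+(\lambda-\lambda_-)\psi_{\lambda_-}-\tau$, and the middle term is negative for $\lambda>\lambda_-$. You then defer to an unspecified ``convexity/monotonicity argument along the chain of negative points,'' and neither version works. Convexity of $\lambda\mapsto M_A(c_\lambda)$ does not help, because $\psi_\lambda$ is the slope of a $\tau$-subgradient and its sign need not match the true derivative. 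Chaining the oracle between successive left endpoints loses $\tau$ at every update, so it yields only $M_A(c_{\lambda_-})\le M_A(y^{\mathrm{old}})+O(\tau\log(2+AR^2/\eta))$, which is weaker than the lemma.

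The missing step is an inequality you actually wrote down and then discarded. Apply the oracle at $c_{\lambda_-}$ directly with $v=c_0=y^{\mathrm{old}}$:
\[
  M_A(y^{\mathrm{old}})\ge M_A(c_{\lambda_-})-\lambda_-\psi_{\lambda_-}-\tau\ge M_A(c_{\lambda_-})-\tau .
\]
It points the right way precisely because the left endpoint has $\psi_{\lambda_-}\le0$; you only tested it at points with $\psi_\lambda\ge0$. This single jump back to $c_0$ is what the paper does, and it keeps the error at $\tau$ regardless of the number of halvings. The same inequality at $\lambda=1$ justifies returning $c_1=z$ when $\psi_0<0$ and $\psi_1\le0$.

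Your opening rule ``if $\psi_1\ge0$ return $\lambda=1$'' picks the wrong case. When $\psi_1\ge0$, nothing bounds $M_A(z)$ by $M_A(y^{\mathrm{old}})+\eta+\tau$, so you must either return $c_0$ (if $\psi_0\ge0$) or bisect. Your last sentence switches to the correct case, but the proposal as written contradicts itself.
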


By \cref{lem:feasible-coupling}, the envelope-value bound preserves telescoping
descent up to budgeted errors, while $\ip{g}{z-c}\ge0$ keeps the cut deep at $z$.
Only selector centers enter the movement sum, not line-search trials.
The nested-body movement bound therefore still applies.

\begin{lemma}[Feasible descent and cut]
\label{lem:feasible-coupling}\linktoproof{lem:feasible-coupling}
Let $U$ be the old feasible upper certificate, let $\Gamma$ be the phase
scale, and let $(x,y,\model)$ be the bundle certificate at the line-search
center $c$.  Put $G=\norm{g}_{s^\ast}$.  The aggregate affine function in
\cref{eq:aggregate-cut} satisfies $a\le f$ on $\Q$, and the new feasible upper
certificate $U^+=\min\{U,f(y)\}$ obeys
\begin{align}
  U^+
  &\le U-\frac{G^2}{4A}+C_s(\eta+\tau),
  \label{eq:feasible-descent}\\
  a(z)-(U^+-\Gamma)
  &\ge\Gamma+\frac{G^2}{2A}-C_s(\eta+\tau).
  \label{eq:feasible-cut}
\end{align}
Its witness is $y^{\mathrm{old}}$ if $U\le f(y)$, and $y$ otherwise.
The cut is retained in both cases. Here $g$ is an aggregate residual,
not necessarily a subgradient of $f$. It is a $\tau$-subgradient of
$\Menv$ by \cref{eq:envelope-epsilon-subgradient}.
\end{lemma}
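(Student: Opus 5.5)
The plan is to derive all three claims from the first-order optimality condition of the model proximal problem \cref{eq:bundle-lower-point}, the gap certificate \cref{eq:bundle-envelope-gap}, and the line-search output of \cref{lem:envelope-line-search}. Two standard facts about $\ell_s$ for $1<s\le2$ are used throughout:
\begin{equation*}
  \ip{J_s(w)}{w}=\norm{w}_s^2,
  \qquad
  \norm{J_s(w)}_{s^\ast}=\norm{w}_s .
\end{equation*}
The second gives $G=A\norm{c-x}_s$, so that $\frac A2\norm{x-c}_s^2=G^2/(2A)$ and $\ip{g}{c-x}=G^2/A$.

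\emph{Validity of the cut.} Since $x$ minimizes the convex function $\model(u)+\frac A2\norm{u-c}_s^2$ over $\Q$, optimality gives $g=AJ_s(c-x)\in\partial\model(x)+N_\Q(x)$. Hence for every $v\in\Q$ we have $\model(v)\ge\model(x)+\ip{g}{v-x}=a(v)$. Combining this with $\model\le f$ gives $a\le f$ on $\Q$.

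\emph{Descent.} Write $\widetilde M(c)=\model(x)+\frac A2\norm{x-c}_s^2$ for the model envelope. Since $\model\le f$, we have $\widetilde M(c)\le\Menv(c)$. The model prox objective is $\mu$-strongly convex in $\ell_s$ with $\mu=A(s-1)$, because $\frac12\norm{\cdot}_s^2$ is $(s-1)$-strongly convex for $s\le2$. Using $f(y)\ge\model(y)$ and \cref{eq:bundle-envelope-gap}, this yields $\frac\mu2\norm{y-x}_s^2\le\tau$.

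Next, \cref{eq:bundle-envelope-gap} gives
\begin{equation*}
  f(y)\le\Menv(c)+\tau-\frac A2\norm{y-c}_s^2 .
\end{equation*}
By $(a-b)^2\ge a^2/2-b^2$, we get
\begin{equation*}
  \frac A2\norm{y-c}_s^2\ge\frac{G^2}{4A}-\frac A2\norm{y-x}_s^2\ge\frac{G^2}{4A}-\frac{\tau}{s-1}.
\end{equation*}
Then \cref{lem:envelope-line-search} together with $\Menv(y^{\mathrm{old}})\le f(y^{\mathrm{old}})\le U$ gives
\begin{equation*}
  f(y)\le U-\frac{G^2}{4A}+\eta+\Bigl(2+\frac1{s-1}\Bigr)\tau .
\end{equation*}
This proves \cref{eq:feasible-descent} with $C_s=2+1/(s-1)$ in the case $U^+=f(y)$. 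If instead $U\le f(y)$, then $U^+=U\le f(y)$ and the same bound holds trivially. The witness bookkeeping is immediate from the definition of $U^+$.

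\emph{Deep cut.} Decompose
\begin{equation*}
  a(z)=\model(x)+\ip{g}{z-c}+\ip{g}{c-x}\ge\model(x)+\frac{G^2}{A},
\end{equation*}
where $\ip{g}{z-c}\ge0$ comes from the line search. By \cref{eq:bundle-envelope-gap} and $U^+\le f(y)$,
\begin{equation*}
  \model(x)=\widetilde M(c)-\frac{G^2}{2A}\ge f(y)-\tau-\frac{G^2}{2A}\ge U^+-\tau-\frac{G^2}{2A}.
\end{equation*}
Hence $a(z)-(U^+-\Gamma)\ge\Gamma+G^2/(2A)-\tau$, which is \cref{eq:feasible-cut}.

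The main obstacle is the descent estimate. The naive bound only gives $f(y)\le\Menv(c)+\tau$, with no $G^2$ gain. The gain has to be extracted from the distance $\norm{y-c}_s$, and this works only because strong convexity forces $y$ to be $O(\sqrt{\tau/\mu})$-close to $x$. That closeness is what produces a purely additive $\tau/(s-1)$ loss rather than a cross term of order $\sqrt\tau\,G$.
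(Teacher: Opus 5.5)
Your proposal is correct and follows essentially the same route as the paper. Validity comes from the prox optimality condition. Descent uses strong convexity to keep $y$ within $O(\sqrt{\tau/\mu})$ of $x$, together with $(a-b)^2\ge a^2/2-b^2$ and the line-search value bound. The deep cut uses the identity $\langle g,c-x\rangle=G^2/A$ and the line-search sign $\langle g,z-c\rangle\ge0$. The only differences are cosmetic: you make $C_s=2+1/(s-1)$ explicit, and you drop the $\frac A2\|y-c\|_s^2$ term earlier in the cut estimate.
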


This is \cref{def:coupled-step} with $\mathcal D(G)=G^2/(4A)$ and
errors $C_s(\eta+\tau)$. Its numerical version in
\cref{prop:numerical-implementation} uses $\mathcal D(G)=G^2/(8A)$.
Before upper
progress terminates the phase, the descent inequality gives
$\sum G_t^2\lesssim A\Gamma$.  The cut inequality and movement give
$\Gamma\sum1/G_t=\widetilde O(RN^{1-\rho})$.  H\"older with exponents
$3$ and $3/2$ yields the quadratic phase bound
\begin{equation}\label{eq:quadratic-phase}
  N^{1+2\rho}
  =\widetilde O_s\!\left(\frac{AR^2}{\Gamma}\right).
\end{equation}

Choose planned phase length $N_\Gamma$ and parameters
\begin{equation}\label{eq:phase-parameters}
  \tau_\Gamma\asymp\eta_\Gamma\asymp\frac{\Gamma}{N_\Gamma},
  \qquad
  A_\Gamma\asymp_s
  L^{2/\kappa}
  \left(\frac{N_\Gamma}{\Gamma}\right)^{(2-\kappa)/\kappa}.
\end{equation}
Substitution in \cref{eq:quadratic-phase} gives
\begin{equation}\label{eq:fixed-point-result}
  N_\Gamma^{\kappa-1+\kappa\rho}
  =\widetilde O_{\kappa,s}\!\left(\frac{LR^\kappa}{\Gamma}\right).
\end{equation}
Thus the feasible quadratic module has exactly the H\"older exponent found in
the ambient warm-up.

\begin{theorem}[Feasible movement-coupled rate for $p<\min\{q,2\}$]
\label{thm:feasible-new-branch}\linktoproof{thm:feasible-new-branch}
Let $1\le p<\min\{q,2\}$ and $1<q\le\infty$, let
$\Q=R B_p^d$, and let $f$ be convex and differentiable on a neighborhood of
$\Q$, satisfying \cref{eq:holder-smoothness}.  Use $s=q$ when $q\le2$ and
$s=2$ when $q\ge2$.  Let $\rho_{p,q}$ be the movement exponent in
\cref{eq:rho-holder},
let $U_t$ denote the feasible upper certificates, and let
$\tau_\Gamma,\eta_\Gamma$ be the bundle and line-search tolerances in
\cref{eq:phase-parameters}. For a first-order budget $T\ge1$
and confidence parameter $\delta\in(0,1)$, run \cref{alg:holder-main}
with initialization from \cref{lem:initialization} and the target accuracy
and query budget from \cref{eq:accuracy-from-budget}, or use its
small-budget fallback. This returns
$\widehat x_T\in R B_p^d$, with $f^\star=\min_{u\in\Q}f(u)$, satisfying
\begin{equation*}
  f(\widehat x_T)-f^\star
  \le
  \widetilde O_{\kappa,p,q}\!\left(
    \frac{LR^\kappa}{T^{\kappa(1+\rho_{p,q})-1}}
  \right).
\end{equation*}
The bound holds with probability at least $1-\delta$,
using at most $T$ first-order calls, including initialization and all
inner bundle and line-search calls.
\end{theorem}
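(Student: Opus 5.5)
We prove \cref{thm:feasible-new-branch} by assembling the feasible envelope module into the abstract reduction of \cref{thm:master-phase}, and then converting coupled-step counts into first-order call counts. Throughout we take $s=\min\{q,2\}$. Since $p<s$, we have $R B_p^d\subseteq R B_s^d$. The selector of \citet{martinezRubioGuzman2026stable} gives movement $\widetilde O(RN^{1-\rho_{p,q}})$ in $\ell_s$. For $q\le 2$ this is immediate. For $q>2$, movement in $\ell_2$ is what $\rho_{p,q}=1/p-(1/q-1/2)_+=1/p$ already accounts for, and the smoothness \cref{eq:holder-smoothness} in $\ell_q$ transfers to $\ell_2$ because $\norm{\cdot}_q\le\norm{\cdot}_2$ and $\norm{\cdot}_{q^\ast}\ge\norm{\cdot}_2$. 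Hence \cref{lem:quadraticization} holds with $s$ in place of $q$, which is the hypothesis of \cref{prop:bundle-solve}.

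\textbf{Step 1 (coupled step).} We verify \cref{def:coupled-step} with $\mathcal D(G)=G^2/(4A_\Gamma)$, errors $C_s(\eta_\Gamma+\tau_\Gamma)$, and movement norm $\ell_s$. This uses \cref{lem:envelope-line-search}, which gives $\ip{g}{z-c}\ge0$, together with \cref{lem:feasible-coupling}. With $\tau_\Gamma\asymp\eta_\Gamma\asymp\Gamma/N_\Gamma$ and small enough constants, the per-phase error budgets $\sum\eta_t\le\Gamma/16$ and $\xi_t\le\Gamma/2$ of \cref{thm:master-phase} hold over the planned horizon. Then \cref{lem:inverse-gradient-norm} and the Hölder argument giving \cref{eq:quadratic-phase} show that every phase ends, by an empty body or by upper progress $\Gamma/4$, within $N_\Gamma$ coupled steps. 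This holds on the event that the selector succeeds. The choice \cref{eq:phase-parameters} gives the fixed point \cref{eq:fixed-point-result}, and because $N_\Gamma$ is planned, the early return at $n=N_\Gamma+1$ never triggers on that event. The gap halves at least by a constant factor per phase, so there are $O(\log(LR^\kappa/\eps))$ phases. We take a union bound over the selector failure probabilities across phases with confidence $\delta/\#\text{phases}$, which costs only logarithmic factors.

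\textbf{Step 2 (oracle accounting).} Each coupled step calls the line search $O(\log(2+A_\Gamma R^2/\eta_\Gamma))$ times by \cref{lem:envelope-line-search}. Each of those calls is a bundle solve costing \cref{eq:bundle-complexity} first-order calls. We pick $\omega=\tau_\Gamma/2$, and need $H_\omega\lesssim_s A_\Gamma$ so that $H_\omega/\mu=O_s(1)$. Substituting $\omega\asymp\Gamma/N_\Gamma$ into \cref{eq:H-omega} gives $H_\omega\asymp L^{2/\kappa}(N_\Gamma/\Gamma)^{(2-\kappa)/\kappa}$, which matches $A_\Gamma$ up to constants. This is exactly why the parameters were tuned as in \cref{eq:phase-parameters}. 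Each coupled step therefore costs polylogarithmically many calls. Summing over phases, $N_\Gamma$ grows geometrically as $\Gamma$ halves, so the total is dominated by the last phase: $T\approx\widetilde O(N_{\eps})$ with $N_\eps^{\kappa-1+\kappa\rho}\asymp LR^\kappa/\eps$. Inverting gives $\eps=\widetilde O(LR^\kappa/T^{\kappa(1+\rho)-1})$. Initialization via \cref{lem:initialization} supplies a feasible witness and a gap $O(LR^\kappa)$. This follows from the descent lemma over a set of $\ell_q$-diameter $O(R)$, given that $B_p\subseteq B_q$ and $f(u)\ge f(c)+\ip{\nabla f(c)}{u-c}$, up to the gradient-scale term, which is handled by one linear minimization. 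For budgets $T$ too small for the logarithmic overheads, we use the fallback of returning the initial witness. Its gap $O(LR^\kappa)$ is within the claimed bound after the $\widetilde O$ constants absorb $\mathrm{polylog}$ factors.

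\textbf{Main obstacle.} The main difficulty is Step 2's fixed-point consistency. $A_\Gamma$ depends on $N_\Gamma$, $N_\Gamma$ must satisfy \cref{eq:fixed-point-result} with the hidden logarithms, and $\log(A_\Gamma R^2/\eta_\Gamma)$ must stay polylogarithmic in $T,d,L,R,1/\eps$. I would first define $N_\Gamma$ as the smallest integer satisfying \cref{eq:quadratic-phase} with explicit polylog factor $\Lambda$ evaluated at the final accuracy. I would then check monotonicity to confirm that the fixed point exists and that the budget identity \cref{eq:accuracy-from-budget} yields total calls at most $T$. Feasibility of the output is automatic, since every witness is a bundle point $y\in\Q$. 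Polynomial time follows from \cref{prop:selector-implementation} and \cref{prop:numerical-implementation}.
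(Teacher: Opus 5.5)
Your proposal is correct and follows essentially the paper's proof: the envelope line search plus \cref{lem:feasible-coupling} gives the coupled step, and the quadratic phase bound with $\omega\asymp\Gamma/N_\Gamma$ and $A_\Gamma\asymp H_\omega$ gives a contradiction at the planned horizon. The oracle cost is then a geometric sum dominated by the last phase, inverted through \cref{eq:accuracy-from-budget}. The remaining differences are cosmetic. The paper uses the summable allocation $\delta_k=\delta/[2(k+1)^2]$ instead of $\delta/\#\text{phases}$, and it cites \cref{lem:quadratic-fixed-point} rather than \cref{thm:master-phase}, whose profile is $\Dholder$. Its small-budget fallback is the single-query tangent minimization at $0$ (error at most $LR^\kappa/\kappa$), which is what your ``one linear minimization'' describes. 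The envelope-based initialization of \cref{lem:initialization} already needs more than one call, so it cannot serve as the fallback when $T=1$.
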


\section{Regime assembly and comparison with lower bounds}

The feasible theorem is assembled as follows.
\begin{enumerate}[leftmargin=2.2em]
  \item If $p<q\le2$, use $s=q$ in the envelope and the
  $p/q$ movement selector from the companion paper.

  \item If $p<2\le q$, norm comparison gives
  \begin{equation*}
    \norm{\nabla f(v)-\nabla f(u)}_2
    \le\norm{\nabla f(v)-\nabla f(u)}_{q^\ast}
    \le L\norm{v-u}_2^{\kappa-1}.
  \end{equation*}
  Use the Euclidean envelope and the $p/2$ movement selector.  Its exponent is
  $\rho_{p,2}=1/p$, which is the required $q\ge2$ exponent.

\end{enumerate}

For $p<\min\{q,2\}$, the powers reported by
\citet{guzman2015open}, based on the lower-bound framework of
\citet{guzmanNemirovski2015lower}, are
\begin{equation*}
  \kappa\left(\frac32+\frac1p-\frac1q\right)-1
  \quad(q<2),
  \qquad
  \kappa\left(1+\frac1p\right)-1
  \quad(q\ge2).
\end{equation*}
These agree with \cref{thm:holder-main}.

\section{Implementation and scope}
\label{sec:implementation}

The reduction assumes a selector that returns feasible centers with
movement $\widetilde O(RN^{1-\rho})$ on the generated nested bodies.
A bounded-work implementation on these bodies,
with the stated movement bound conditionally on the preceding history,
is supplied by \cref{prop:selector-implementation}, making use of the selector's implementation in
\citet[Theorem~9, Proposition~11, and Appendix~E]{martinezRubioGuzman2026stable}.

We work in the real-arithmetic model: arithmetic, comparisons,
logarithms, rational powers, powers with the fixed norm and regularity
exponents, and independent scalar standard Gaussian draws have unit
cost. Exact first-order calls are counted separately. For fixed
$\kappa,p,q$, the additional work is polynomial in
$d$, $1+LR^\kappa/\eps$, $\log(1/\delta)$, and
$\log(2+\norm{\nabla f(0)}_{s^\ast}/(LR^{\kappa-1}))$
in the new branch. The last dependence comes only from solving the
inner tangent models; the first-order bound is independent of affine
terms in $f$. The convex-solve and sampling bounds in
\cref{prop:selector-implementation}, together with the hard budgets in
\cref{alg:holder-main}, bound the work on every run.

\acks{
Crist\'obal Guzm\'an was partially funded by ANID FONDECYT 1251029 grant, and ANID Basal FB210017 National Center for Artificial Intelligence CENIA.
David Martínez-Rubio was funded by grant La Caixa Junior Leader Fellowship 2025. He thanks OpenAI for free access to their models. 
Mathieu Molina received funding from the European Research Council (ERC) under the European Union's Horizon Europe program (grant agreement No. 101170373), as a postdoctoral fellow at Tel Aviv University.

This work was elaborated in combination with ChatGPT/Codex. After providing the initial ideas (the contents of the warm-up \cref{sec:warmup} along with pointing to using the Moreau envelope analysis, which can absorb errors, plus treating Hölder smooth functions as inexact smooth functions), ChatGPT 5.6 (the newest model when we elaborated the results) was able to craft the details of the proof. 
This draft is not in the form we would have liked to share it and the authors will shortly improve its writing quality, organization of ideas and overall presentation. 
}

\clearpage
\appendix
\ifusecolttemplate\crefalias{section}{appendix}\fi

\section{Proof of the abstract phase theorem}

\newtarget{def:distance}{}
\begin{proof}\linkofproof{lem:inverse-gradient-norm}
The new body lies in the halfspace
\begin{equation*}
  H_t=\set{u:a_t(u)\le U_{t+1}-\Gamma}.
\end{equation*}
If $G_t=0$, the coupled cut is constant and its retained halfspace is empty.
Suppose $G_t>0$.  A displacement $w$ from $z_t$ into $H_t$ must satisfy
$\ip{g_t}{w}\le-[a_t(z_t)-(U_{t+1}-\Gamma)]$.  H\"older's inequality gives
$\norm{w}_s\ge[a_t(z_t)-(U_{t+1}-\Gamma)]/G_t$.  Equality is attained by a
suitably scaled negative unit $\ell_s$ direction on which $g_t$ attains its
dual norm.  Therefore the distance from the current center is
\begin{equation*}
  \dist_s(z_t,H_t)
  =\frac{a_t(z_t)-(U_{t+1}-\Gamma)}{G_t}
  \circled{1}[\ge]
  \frac{\Gamma-\xi_t}{G_t}
  \circled{2}[\ge]
  \frac\Gamma{2G_t}.
\end{equation*}
Here $\circled{1}$ uses the coupled cut
\cref{eq:coupled-cut} and $\mathcal D\ge0$, and $\circled{2}$ uses
$\xi_t\le\Gamma/2$.  Since $z_{t+1}$ belongs to the new body, its movement
is at least this distance.
\end{proof}

\begin{proof}\linkofproof{thm:master-phase}
Fix a phase of scale $\Gamma$ and a nonterminal prefix of
length $N$. We first bound $N$ and then sum the phase costs.
Recall that $U_t$ is the upper certificate and $G_t$ the cut-normal norm.
Here the descent profile is $\Dholder$. As
long as upper progress has not terminated the phase,
$U_0-U_N<\Gamma/4$.  Summing \cref{eq:coupled-descent} and using the descent
error budget gives
\begin{equation}\label{eq:master-power-sum}
  \sum_{t<N}G_t^{m_\kappa}
  \le C_\kappa L^{1/(\kappa-1)}\Gamma.
\end{equation}
Unpack the assumed soft movement bound as
$\sum_{t<N}\norm{z_{t+1}-z_t}_s
\le CR\Lambda(d,N)N^{1-\rho}$, where $\Lambda$ is nondecreasing and
polylogarithmic.  The inverse-normal-norm lemma and this movement bound give
\begin{equation}\label{eq:master-inverse-sum}
  \frac\Gamma2\sum_{t<N}\frac1{G_t}
  \le R\Lambda(d,N)N^{1-\rho}.
\end{equation}
Apply H\"older in the form
\begin{align*}
  N
  &=\sum_{t<N}
    (G_t^{m_\kappa})^{1/(m_\kappa+1)}
    (G_t^{-1})^{m_\kappa/(m_\kappa+1)}\\
  &\le
  \left(\sum_{t<N}G_t^{m_\kappa}\right)^{1/(m_\kappa+1)}
  \left(\sum_{t<N}\frac1{G_t}\right)^{m_\kappa/(m_\kappa+1)}.
\end{align*}
Substituting \cref{eq:master-power-sum,eq:master-inverse-sum}, raising to
$m_\kappa+1$, and then raising to $\kappa-1$ yields
\begin{equation*}
  N^{\kappa-1+\kappa\rho}
  \le C_\kappa
  \frac{LR^\kappa\Lambda(d,N)^\kappa}{\Gamma}.
\end{equation*}

It remains to verify gap contraction.  If upper progress occurs, the upper
certificate drops by $\Gamma/4=(U_0-\ell_0)/8$, while the lower certificate
does not decrease; the new gap is at most $7(U_0-\ell_0)/8$.  If the body
$\{m\le U-\Gamma\}$ is empty, then
$\ell=\min_\Q m\ge U-\Gamma$, so the new gap is at most
$\Gamma=(U_0-\ell_0)/2$.
The bounds on phase costs grow geometrically as the gap
shrinks and are therefore dominated by the final phase, including at most
one terminal step per phase. Starting from a certified gap $O(LR^\kappa)$
and inverting the resulting bound on the total number $T$ of coupled
steps gives optimization error
$\widetilde O_\kappa(LR^\kappa/T^{\kappa-1+\kappa\rho})$
for the retained upper witness.
\end{proof}

\section{The ambient module}

\begin{proof}\linkofproof{lem:ambient-coupling}
Exact minimization on $[y,z]$ gives
\begin{equation}\label{eq:ambient-line-signs}
  f(x)\le f(y),
  \qquad
  \ip{g}{z-x}\ge0.
\end{equation}
Let $a=(G/L)^{1/(\kappa-1)}$.  By
\cref{eq:holder-descent-lemma} and the norming-vector identity,
\begin{align*}
  f(x-av_q(g))
  &\le f(x)-aG+\frac L\kappa a^\kappa
  =f(x)-\frac{\kappa-1}{\kappa}
    L^{-1/(\kappa-1)}G^{\kappa/(\kappa-1)}
  =U^+.
\end{align*}
Thus $f(y^+)\le U^+$.  The first sign in
\cref{eq:ambient-line-signs} gives
$U^+\le U-\Dholder(G)$.  Because $x^\ast$ is a global minimizer,
$U^+\ge f(y^+)\ge f(x^\ast)$, so this is a valid upper certificate.

For the tangent cut,
\begin{equation*}
  a(z)-(U^+-\Gamma)
  =\ip{g}{z-x}+\Dholder(G)+\Gamma
  \circled{1}[\ge]\Gamma+\Dholder(G).
\end{equation*}
Here $\circled{1}$ uses the second sign in
\cref{eq:ambient-line-signs}.
\end{proof}

\begin{proof}\linkofproof{thm:ambient-rate}
Apply \cref{lem:ambient-coupling,thm:master-phase} with the imported movement
exponent $\rho_{p,q}$.  To initialize, query at $0$.  Since
$\nabla f(x^\ast)=0$, \cref{eq:holder-smoothness} gives
\begin{equation*}
  \norm{\nabla f(0)}_{q^\ast}
  \le L\norm{x^\ast}_q^{\kappa-1}
  \le LR^{\kappa-1}.
\end{equation*}
The tangent range over $R B_p^d$ is at most a constant times
$R\norm{\nabla f(0)}_{p^\ast}\le LR^\kappa$, where $p<q$.  Hence the
initial certified gap is $O(LR^\kappa)$.  Geometric phases give the claimed
rate.  Nothing in the proof requires the upper witness to lie in the
auxiliary ball.
\end{proof}

\section{Inexact gradient trick and the certified bundle solve}

\begin{proof}\linkofproof{lem:quadraticization}
It suffices to prove, for every $t\ge0$,
\begin{equation*}
  \frac L\kappa t^\kappa
  \le\frac{H_\omega}{2}t^2+\omega.
\end{equation*}
For $1<\kappa<2$, the maximum of the left side minus $Ht^2/2$ occurs at
$t=(L/H)^{1/(2-\kappa)}$ and equals
\begin{equation*}
  \frac{2-\kappa}{2\kappa}
  L\left(\frac LH\right)^{\kappa/(2-\kappa)}.
\end{equation*}
The definition of $H_\omega$ makes this value equal to $\omega$.  Combining
the scalar inequality with \cref{eq:holder-descent-lemma} proves the claim.
At $\kappa=2$, it is the usual descent lemma.
\end{proof}

\begin{proof}\linkofproof{prop:bundle-solve}
We bound the gap between the best evaluated proximal objective and the
global model minimum. Put $h(u)=A\norm{u-c}_s^2/2$ and $F=f+h$.
Query at $c$ and let $m_0$ be its affine tangent. For $j\ge0$, solve
\begin{equation*}
  x_j\in\argmin_{u\in\Q}\{m_j(u)+h(u)\},
  \qquad \ell_j=m_j(x_j)+h(x_j),
\end{equation*}
query at $x_j$, and retain $V_j=\min_{0\le i\le j}F(x_i)$ with a
corresponding witness. Stop when $E_j:=V_j-\ell_j\le\tau$; otherwise
add the tangent at $x_j$ to form $m_{j+1}$.

Strong convexity and $m_{j+1}\ge m_j$ give
\begin{equation*}
  \frac\mu2\norm{x_{j+1}-x_j}_s^2
  \le \ell_{j+1}-\ell_j=:b_j.
\end{equation*}
The new model contains the tangent at $x_j$, so
\begin{align*}
  E_{j+1}
  &\le F(x_{j+1})-\ell_{j+1}\\
  &\circled{1}[\le]
    \frac{H_\omega}{2}\norm{x_{j+1}-x_j}_s^2+\omega\\
  &\le \frac{H_\omega}{\mu}b_j+\omega.
\end{align*}
Here $\circled{1}$ uses the assumed quadratic upper model.
Since $V_{j+1}\le V_j$, we have $b_j\le E_j-E_{j+1}$. With
$a=H_\omega/\mu$, this proves
\begin{equation*}
  E_{j+1}\le\frac{a}{1+a}E_j+\frac{\omega}{1+a},
  \qquad
  E_j\le\omega+\left(\frac{a}{1+a}\right)^j E_0.
\end{equation*}
The initial tangent at $c$ gives
$E_0=f(x_0)-m_0(x_0)\le2H_\omega R^2+\omega$.
Thus $E_j\le\tau$ within \cref{eq:bundle-complexity}.
The retained model and witness satisfy
\cref{eq:bundle-lower-point,eq:bundle-envelope-gap}. All minima are over
the original $\Q$, so the stopping certificate is global.
\end{proof}

For $1<s\le2$, $\norm{\cdot}_s^2/2$ is
$\sigma_s$-strongly convex with $\sigma_s=s-1$.
Taking $\omega=\tau/8$ and $A\ge C_sH_\omega$ makes
$H_\omega/[A(s-1)]=O_s(1)$, as required for logarithmic oracle cost.

\section{Envelope line search and feasible coupling}

\begin{proof}\linkofproof{lem:envelope-line-search}
Fix one certificate $(x,y,\model)$ at a queried center $c$ and define its
lower envelope
\begin{equation*}
  \underline M(v)
  =\min_{u\in\Q}\set{\model(u)+\frac A2\norm{u-v}_s^2}.
\end{equation*}
Because $\model\le f$, $\underline M(v)\le\Menv(v)$ for all $v$.  At $c$,
the bundle gap gives
\begin{equation*}
  \Menv(c)
  \le f(y)+\frac A2\norm{y-c}_s^2
  \le\underline M(c)+\tau.
\end{equation*}
Danskin's theorem \citep{danskin1967maxmin} gives
$g=A J_s(c-x)\in\partial\underline M(c)$.  Combining the three facts yields
the certified $\tau$-subgradient inequality
\begin{equation}\label{eq:envelope-epsilon-subgradient}
  \Menv(v)
  \ge\Menv(c)+\ip{g}{v-c}-\tau,
  \qquad v\in\Q.
\end{equation}

Write $d=z-y^{\mathrm{old}}$, $c_\lambda=y^{\mathrm{old}}+\lambda d$, and
$\psi_\lambda=\ip{g_\lambda}{d}$.  If $\psi_0\ge0$, return $c_0$.  If
$\psi_0<0$, query $1$; when $\psi_1\le0$, apply
\cref{eq:envelope-epsilon-subgradient} at $c_1$ with $v=c_0$ to get
$\Menv(c_1)\le\Menv(c_0)+\tau$, and return $c_1$.

It remains to treat $\psi_0<0<\psi_1$.  Bisect while maintaining endpoints
$\lambda_-,\lambda_+$ with
$\psi_{\lambda_-}\le0\le\psi_{\lambda_+}$, and stop
when $\lambda_+-\lambda_-\le\eta/G_{\mathrm{ls}}$, where
$G_{\mathrm{ls}}=4AR^2$.  The envelope is $G_{\mathrm{ls}}$-Lipschitz on this
segment: comparison with the prox minimizer at the other
center gives
\begin{equation*}
  \abs{\Menv(c')-\Menv(c)}
  \le2AR\norm{c'-c}_s.
\end{equation*}
The $\tau$-subgradient inequality at the left endpoint, evaluated at
$c_0$, gives
$\Menv(c_{\lambda_-})\le\Menv(c_0)+\tau$.  Therefore
\begin{equation*}
  \Menv(c_{\lambda_+})
  \le\Menv(c_0)+\tau+\eta.
\end{equation*}
Finally,
$\ip{g_{\lambda_+}}{z-c_{\lambda_+}}
=(1-\lambda_+)\psi_{\lambda_+}\ge0$.  The initial bracket has length one, so
the number of evaluations is $O(\log(2+AR^2/\eta))$.
\end{proof}

\begin{proof}\linkofproof{lem:feasible-coupling}
Let
\begin{equation*}
  d_x=\norm{c-x}_s,
  \qquad d_y=\norm{c-y}_s,
  \qquad G=\norm{g}_{s^\ast}=Ad_x.
\end{equation*}
Optimality in \cref{eq:bundle-lower-point} supplies
$g_\model\in\partial\model(x)$ with
$g-g_\model\in\Ncone_\Q(x)$.  Hence, for every $v\in\Q$,
\begin{equation*}
  \model(v)
  \ge\model(x)+\ip{g_\model}{v-x}
  \ge\model(x)+\ip{g}{v-x}=a(v).
\end{equation*}
Thus $a\le\model\le f$.

The model prox objective is $A\sigma_s$-strongly convex.  Since
$\model(y)\le f(y)$, the bundle gap implies
\begin{equation*}
  \frac{A\sigma_s}{2}\norm{y-x}_s^2\le\tau.
\end{equation*}
The triangle inequality therefore gives
\begin{equation}\label{eq:dy-lower}
  d_y^2
  \ge\frac12d_x^2-\frac{2\tau}{A\sigma_s}.
\end{equation}

The line search gives
$\Menv(c)\le\Menv(y^{\mathrm{old}})+\eta+\tau$ and
$\ip{g}{z-c}\ge0$.  Since
$\Menv(y^{\mathrm{old}})\le f(y^{\mathrm{old}})\le U$, the upper certificate obeys
\begin{align*}
  f(y)
  &\le\Menv(c)+\tau-\frac A2d_y^2\\
  &\le U+\eta+2\tau-\frac A2d_y^2\\
  &\circled{1}[\le]
  U-\frac{G^2}{4A}+C_s(\eta+\tau).
\end{align*}
Here $\circled{1}$ uses \cref{eq:dy-lower}.  Taking the minimum with the old
upper value proves \cref{eq:feasible-descent}.

For the cut, the bundle gap also gives
\begin{equation*}
  \model(x)-f(y)
  \ge\frac A2(d_y^2-d_x^2)-\tau.
\end{equation*}
Consequently,
\begin{align*}
  a(z)-(f(y)-\Gamma)
  &\ge
  \ip{g}{z-x}+\frac A2(d_y^2-d_x^2)+\Gamma-\tau\\
  &=\ip{g}{z-c}+\frac A2(d_x^2+d_y^2)+\Gamma-\tau\\
  &\ge\Gamma+\frac{G^2}{2A}-C_s(\eta+\tau).
\end{align*}
Replacing $f(y)$ by the no-larger $U^+$ only deepens the cut.  This proves
\cref{eq:feasible-cut}.
\end{proof}

\section{Certified numerical solves and cuts}

\newtarget{def:numerical-tolerance}{}
\begin{proposition}[Certified numerical coupling]
\label{prop:numerical-implementation}\linktoproof{prop:numerical-implementation}
Let $1<s\le2$, $\Q=RB_p^d\subseteq RB_s^d$, and
$A\ge C_sH_{\tau/8}$. Fix $\eta,\tau>0$ and
$0<\nu\le\min\{\tau,AR^2\}$. Certified approximate model solves and
the sign search of \cref{lem:envelope-line-search} produce a feasible
witness $y^+$, certificate $U^+\ge f(y^+)$, and a valid affine cut
$\widehat a\le f$ such that
\begin{align*}
  U^+&\le U-\frac{G^2}{8A}+C_s(\eta+\tau+\nu),\\
  \widehat a(z)-(U^+-\Gamma)
    &\ge\Gamma+\frac{G^2}{4A}-C_s(\eta+\tau+\nu),
\end{align*}
where $G$ is the cut-normal norm in $\ell_{s^\ast}$. The number of
first-order calls is
$O_s(\log(2+AR^2/\tau)\log(2+AR^2/\eta))$.
For outer tests, compute a certified interval $[L_m,V_m]$ of width
$\Gamma/16$ for $\min_\Q m$, and terminate the phase if
$L_m\ge U-5\Gamma/4$. Keeping the largest certified lower bound
preserves a gap contraction of at most $7/8$ per phase.
\end{proposition}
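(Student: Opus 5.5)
The plan is to redo the exact feasible coupling of \cref{lem:feasible-coupling}, but with two changes: the model prox subproblems are solved only to a certified accuracy $\nu$, and the final cut is built from the approximate solution. Every extra slack is then charged to $C_s(\eta+\tau+\nu)$, at the cost of halving the quadratic coefficients.

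\emph{Step 1: certified approximate model solves.} Each model prox problem $\min_{u\in\Q}\{\model(u)+\frac A2\norm{u-c}_s^2\}$ is a convex program over $RB_p^d$ with a max-affine objective. I would compute a point $\widehat x\in\Q$ together with a certified lower bound $\widehat\ell$ on its optimal value, such that the objective at $\widehat x$ exceeds $\widehat\ell$ by at most $\nu$; a duality-gap certificate from any polynomial convex solver suffices.

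\emph{Step 2: a valid cut from the approximate solution.} Replace the Danskin residual by an $\epsilon$-subgradient. The candidate cut is
\[
\widehat a(v)=\widehat\ell-\tfrac A2\norm{\widehat x-c}_s^2+\ip{\widehat g}{v-\widehat x}-C\nu,
\qquad \widehat g=AJ_s(c-\widehat x).
\]
To show $\widehat a\le\model\le f$ on $\Q$, I would use strong convexity of the prox objective: it gives $\frac{A\sigma_s}2\norm{\widehat x-x}_s^2\le\nu$. I would then perturb the exact first-order optimality inequality from the proof of \cref{lem:feasible-coupling} by the resulting $O(\sqrt{\nu AR^2})$ errors.

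This is the step I expect to be the main obstacle, because those errors are of order $\sqrt{\nu AR^2}$ rather than $\nu$. They must be absorbed either by Young's inequality against the $G^2/(2A)$ terms, which is exactly why the constants drop to $G^2/(8A)$ and $G^2/(4A)$, or by the hypothesis $\nu\le AR^2$. If Young cannot absorb them cleanly, I would fall back to a safer cut: form the prox value with a dual certificate, that is, an explicit convex combination of bundle tangents plus a normal-cone term. That cut is valid by construction, and its normal is within $O(\sqrt{A\nu})$ of $\widehat g$.

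\emph{Step 3: descent and cut depth.} The bundle stopping test of \cref{prop:bundle-solve} runs with the certified $\widehat\ell$, and its contraction survives with an additive $\nu$ per iteration. The line search of \cref{lem:envelope-line-search} runs with the certified signs. Repeating the computations behind \cref{eq:dy-lower} and \cref{lem:feasible-coupling}, with each $\tau$ replaced by $\tau+O(\nu)$ and a cross term $\frac A2 d_x^2$ split in half, gives both displayed inequalities.

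\emph{Step 4: oracle count.} The number of first-order calls is the line-search count $O(\log(2+AR^2/\eta))$ times the bundle count. Since $A\ge C_sH_{\tau/8}$, we have $H/\mu=O_s(1)$, so the bundle count is $O_s(\log(2+AR^2/\tau))$ by \cref{prop:bundle-solve}.

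\emph{Step 5: outer tests.} In a phase of scale $\Gamma$, the emptiness test of $K$ is replaced by the certified interval $[L_m,V_m]$. If $L_m\ge U-5\Gamma/4$, the new gap is at most $5\Gamma/4=5(U_0-\ell_0)/8\le7(U_0-\ell_0)/8$. Otherwise $V_m<U-\Gamma-\Gamma/16$ or so, which certifies that $K$ is nonempty (with a small loss in the level), so selector calls remain well defined. Keeping the largest certified lower bound ensures $\ell$ never decreases, which preserves the contraction argument of \cref{thm:master-phase}.
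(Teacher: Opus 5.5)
There is a genuine gap in Step 2, and Step 3 inherits it. If each model prox problem is solved only to accuracy $\nu$, strong convexity gives $\|\widehat x-x\|_s\le\sqrt{2\nu/\mu}$. Because $J_s$ is only $(s-1)$-H\"older for $s<2$, the residual error is controlled only as $\|\widehat g-g^\circ\|_{s^\ast}\lesssim_s AR^{2-s}(\nu/\mu)^{(s-1)/2}$, where $g^\circ=AJ_s(c-x)$; this is already $\sqrt{2A\nu}$ at $s=2$. The only available comparison with the exact cut $a^\circ(v)=m(x)+\langle g^\circ,v-x\rangle$ is $\widehat a(v)\le a^\circ(v)+\langle\widehat g-g^\circ,v-\widehat x\rangle-C\nu$. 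So certifying validity on all of $\mathcal Q$ needs, in the worst case, a slack of order $R\|\widehat g-g^\circ\|_{s^\ast}$, up to $R\sqrt{A\nu}$ at $s=2$ and more for $s<2$. The same holds for the approximate-subgradient inequality the line search uses at $y^{\mathrm{old}}$. Neither of your absorption mechanisms reaches this term. It pairs the error with a displacement of size up to $2R$, not with $c-\widehat x$ (whose size is $G/A$), so Young against $G^2/(2A)$ has nothing to act on. And $\nu\le AR^2$ gives $\nu\le R\sqrt{A\nu}$, which is the wrong direction. Hence your cut is either not certifiably valid or loses far more than $C_s\nu$, and the claim that every $\tau$ just becomes $\tau+O(\nu)$ is unsupported. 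Separately, additive errors of size $\nu$, which may equal $\tau$, need not let the inexact bundle loop ever pass its stopping test $V_j-l_j\le\tau$.

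The missing idea is to decouple the solver accuracy from $\nu$. Model solves cost no first-order calls, so the paper solves them to accuracy
\[
\zeta\le\min\Bigl\{\frac{\tau}{16(1+a)},\,\nu,\,\frac\mu2\Bigl(\frac{\nu}{4C_sAR^{3-s}}\Bigr)^{2/(s-1)}\Bigr\},\qquad a=\frac{2H_\omega}{\mu}.
\]
The bound $\|J_s(w)-J_s(w')\|_{s^\ast}\le C_sR^{2-s}\|w-w'\|_s^{s-1}$ on $2RB_s^d$ then gives $\|g-g^\circ\|_{s^\ast}\le E_g=\nu/(4R)$. The buffered cut $\widehat a(v)=m(\widetilde x)+\langle g,v-\widetilde x\rangle-\zeta-2RE_g$ satisfies $a^\circ-\zeta-4RE_g\le\widehat a\le a^\circ\le f$, and $g$ is a $(\tau+2RE_g)$-subgradient of $M_A$ for the line search. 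The halved constants come from $\|g^\circ\|_{s^\ast}^2\ge G^2/2-E_g^2$, when the exact coupling argument for $g^\circ$ is restated in terms of the computed normal; $\nu\le AR^2$ is used only to get $E_g^2/A\le\nu/16$.

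Your dual-certificate fallback can also be made rigorous, but only if you drop the comparison with $\widehat g$, which reintroduces the same $R\sqrt{A\nu}$ loss, and use the dual cut's own normal everywhere. A dual pair $(\lambda,n)$ gives $\tilde a(v)=\sum_i\lambda_ia_i(v)+\langle n,v\rangle-R\|n\|_{p^\ast}\le f$ on $\mathcal Q$. Its normal equals $AJ_s(c-u^\ast)$ for the unconstrained minimizer $u^\ast$ of $\tilde a+\frac A2\|\cdot-c\|_s^2$. The proof of \cref{lem:feasible-coupling} then runs with $u^\ast$ in place of $x$, including the line-search subgradient inequality, and loses only the certified primal--dual gap. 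Step 5 matches the paper.
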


\begin{proof}\linkofproof{prop:numerical-implementation}
\newtarget{def:objective-tolerance}{}
Put $h(u)=A\norm{u-c}_s^2/2$ and $\mu=A(s-1)$. For each tangent
model $m_j$, compute a feasible $\widetilde x_j$ and lower bound $l_j$
with $m_j(\widetilde x_j)+h(\widetilde x_j)\le l_j+\zeta$.
Write $\ell_j=\min_\Q(m_j+h)$, so $l_j\le\ell_j$, retain
$V_j=\min_{i\le j}(f+h)(\widetilde x_i)$, and stop when
$V_j-l_j\le\tau$. Otherwise add the tangent at $\widetilde x_j$.
For the exact model minimizer $x_j$, strong convexity gives
\begin{equation*}
  \norm{\widetilde x_j-x_j}_s^2\le2\zeta/\mu,
  \qquad
  \norm{\widetilde x_{j+1}-\widetilde x_j}_s^2
  \le4(\ell_{j+1}-\ell_j+2\zeta)/\mu.
\end{equation*}
The second bound compares both points to $x_j$. Set
$E_j=V_j-\ell_j$, $\omega=\tau/8$, and $a=2H_\omega/\mu$.
The inexact gradient trick at the last tangent and $V_{j+1}\le V_j$ give
\begin{equation*}
  E_{j+1}\le a(\ell_{j+1}-\ell_j)+(2a+1)\zeta+\omega
  \le a(E_j-E_{j+1})+(2a+1)\zeta+\omega.
\end{equation*}
For $\zeta\le\tau/[16(1+a)]$, this contracts to at most $\tau/4$.
Since $E_0\le2H_\omega R^2+\omega+\zeta$ and
$V_j-l_j\le E_j+\zeta$, it takes
$O_s(\log(2+AR^2/\tau))$ calls to certify
\cref{eq:bundle-envelope-gap} relative to the exact model minimum;
the point computed is $\widetilde x$, while the exact minimizer $x$
is used only in the analysis below.

At termination, let $x$ be the exact model minimizer and compute
$g=AJ_s(c-\widetilde x)$ in the stated real-arithmetic model.
The duality map satisfies
$\norm{J_s(w)-J_s(w')}_{s^\ast}
\le C_sR^{2-s}\norm{w-w'}_s^{s-1}$ on $2RB_s^d$;
this follows from its coordinate formula and the $(s-1)$-H\"older
continuity of $t\mapsto |t|^{s-2}t$.
For $g^\circ=AJ_s(c-x)$, choose
\begin{equation*}
  \zeta\le\min\left\{
    \frac{\tau}{16(1+a)},\ \nu,\quad
    \frac\mu2\left(\frac{\nu}{4C_sAR^{3-s}}\right)^{2/(s-1)}
  \right\},
  \qquad E_g=\frac{\nu}{4R}.
\end{equation*}
Then $\norm{g-g^\circ}_{s^\ast}\le E_g$. The exact cut
$a^\circ(v)=m(x)+\ip{g^\circ}{v-x}$ is valid, while the model gap
and convexity of $h$ give
$0\le m(\widetilde x)-m(x)-\ip{g^\circ}{\widetilde x-x}\le\zeta$.
Thus the computable cut
$\widehat a(v)=m(\widetilde x)+\ip{g}{v-\widetilde x}-\zeta-2RE_g$
satisfies
\begin{equation}\label{eq:numerical-cut-comparison}
  a^\circ(v)-\zeta-4RE_g\le\widehat a(v)\le a^\circ(v)\le f(v)
  \qquad(v\in\Q).
\end{equation}

By \cref{eq:envelope-epsilon-subgradient}, $g$ is a
$(\tau+2RE_g)$-subgradient of $\Menv$ on $\Q$. The proof of
\cref{lem:envelope-line-search} therefore applies with that error,
giving $\Menv(c)\le\Menv(y^{\mathrm{old}})+\eta+\tau+2RE_g$ and
$\ip{g}{z-c}\ge0$ in $O(\log(2+AR^2/\eta))$ trials.
Consequently $\ip{g^\circ}{z-c}\ge-2RE_g$. Apply the proof of
\cref{lem:feasible-coupling} to $g^\circ$, then use
\cref{eq:numerical-cut-comparison} and
$\norm{g^\circ}_{s^\ast}^2\ge G^2/2-E_g^2$.
Since $E_g^2/A\le\nu/16$, this gives the stated descent and cut
bounds. Retain the witness corresponding to $U^+=\min\{U,f(y)\}$.

For the outer test, let $V_m=m(v_m)$ with $v_m\in\Q$.
If $L_m\ge U-5\Gamma/4$, the new gap is at most $5\Gamma/4$,
which is $5/8$ of the starting gap. Otherwise
$m(v_m)<U-19\Gamma/16$, leaving slack $3\Gamma/16$ in every
model cut of $K=\{m\le U-\Gamma\}$. Upper progress still contracts
the gap by $7/8$.
To apply the companion's implementation, normalize $R=L=1$ and
let $B$ be the largest norm of the stored cut normals. Here
$B\le2\max A$. With $\theta=\min\{1/2,\Gamma/[32(1+B)]\}$,
\begin{equation*}
  (1-\theta)v_m+\frac{\theta}{2\sqrt d}B_2^d\subseteq K.
\end{equation*}
This follows from the cut slack and $\norm{v_m}_p\le1$.
The phase parameters give inverse-polynomial radii and objective
tolerances for fixed $\kappa,s$.
The transfer to the companion's sample problems is
proved in \cref{prop:selector-implementation}.
Inner tangent slopes are bounded by
$\norm{\nabla f(0)}_{s^\ast}+1$ in normalized units, giving the
logarithmic dependence stated in \cref{sec:implementation}.
\end{proof}

\begin{proposition}[Feasible selector implementation]
\phantomsection\label{prop:selector-implementation}
Let $1\le p<\min\{q,2\}$ and $s=\min\{q,2\}$.
The numerical version of \cref{alg:holder-main}, initialized by
\cref{lem:initialization}, admits feasible randomized selectors with
$\ell_s$-movement $\widetilde O_{p,s}(RN_\Gamma^{1-\rho_{p,q}})$ in phase $k\ge0$,
conditionally on its initial history, with probability at least
$1-\delta_k$. Their implementation uses no additional first-order calls.
For $0<\eps\le LR^\kappa$, allocating
$\delta_k=\delta/[2(k+1)^2]$ gives simultaneous success with probability
at least $1-\delta$ and the additional-work bound in
\cref{sec:implementation} on every run. On a sampling cutoff, return
the retained feasible upper witness.
\end{proposition}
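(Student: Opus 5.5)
The plan is to reduce each phase of the numerical \cref{alg:holder-main} to one call of the companion's bounded-work selector \citep[Theorem~9, Proposition~11, Appendix~E]{martinezRubioGuzman2026stable}. For that we check its input hypotheses on the bodies $K_t=\{u\in\Q:m_t(u)\le U_t-\Gamma\}$ that our algorithm generates, and then union-bound over phases. First, normalize $R=L=1$ by rescaling $u\mapsto u/R$ and $f\mapsto f/(LR^\kappa)$. This maps $\Q$ to $B_p^d$ and leaves all movement statements invariant up to the factor $R$. Within a phase the bodies are nested, since $m_t$ only gains cuts and $U_t$ is nonincreasing. Each body is described by finitely many affine cuts, $\Q$ itself, and a separation oracle computable in the real-arithmetic model. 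This gives the companion's access model, namely membership/separation for nested convex bodies inside $B_p^d$. No first-order calls are needed: the selector only reads the stored cuts $(b_i,g_i)$, never $f$.

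Second, we verify the quantitative regularity the companion needs, which is an inner ball and an inverse-polynomial accuracy. By the outer-test part of \cref{prop:numerical-implementation}, every nonterminal body contains $(1-\theta)v_m+\frac{\theta}{2\sqrt d}B_2^d$, with $\theta=\min\{1/2,\Gamma/[32(1+B)]\}$ and $B\le 2\max A_\Gamma$. Next we bound $B$ and $1/\Gamma$ polynomially. The phases run only while $\Gamma\gtrsim\eps$, so $1/\Gamma\le \mathrm{poly}(1+LR^\kappa/\eps)$. By \cref{eq:phase-parameters}, $A_\Gamma\lesssim L^{2/\kappa}(N_\Gamma/\Gamma)^{(2-\kappa)/\kappa}$, where $N_\Gamma$ is polynomial through \cref{eq:fixed-point-result}. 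The inner radius is then inverse-polynomial, and so are the objective tolerances the companion's sample problems require. Here we must also account for the $\zeta$-approximate model solves: the certified interval $[L_m,V_m]$ of width $\Gamma/16$ and the retained slack $3\Gamma/16$ absorb them. This is the step I expect to be the main obstacle. The companion's guarantees are stated for bodies given with an explicit well-roundedness certificate. We have to show that the slack left by our inexact cuts and the outer test survives across the whole phase, not just at the first body, because later cuts can shrink the body. I would first try to re-derive the inner ball at every step $t$ from the phase-continuation condition $U_t>U^{\mathrm{start}}-\Gamma/4$ and the nonemptiness test. If that fails, I would fall back to certifying emptiness as soon as the inner radius would fall below the threshold, which is treated as a (harmless) phase termination.

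Third, apply the companion's theorem with horizon $N_\Gamma$ and failure probability $\delta_k$. Conditionally on the history before phase $k$, its output has $\ell_q$-movement $\widetilde O_{p,q}(N_\Gamma^{1-\rho_{p,q}})$. For $q\ge2$ we use the $p/2$ selector, for which $\rho_{p,2}=1/p=\rho_{p,q}$. For $q<2$ we use $s=q$ directly. The fresh randomness of each phase makes the conditional statement legitimate even though the bodies depend adaptively on earlier phases. Each phase shrinks the gap by at least the factor $7/8$, so the number of phases is $O(\log(LR^\kappa/\eps))$. Since $\sum_k\delta/[2(k+1)^2]\le\delta$, simultaneous success follows from the union bound. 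For the work bound on every run, the companion's sampling cutoff caps the per-phase work polynomially, and on cutoff we return the retained feasible witness. The total additional work is therefore polynomial in $d$, $1+LR^\kappa/\eps$, $\log(1/\delta)$, and the logarithm of the inner slopes noted in \cref{sec:implementation}.
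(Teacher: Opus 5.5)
Your outline is the paper's proof. You normalize to $R=L=1$, note that the bodies are read off the stored cuts (so no first-order calls), take the interior ball from the certified outer test of \cref{prop:numerical-implementation}, invoke the companion's Proposition~11 and Theorem~9 with horizon $N_\Gamma$ and fresh per-phase randomness, and union-bound with $\delta_k$.

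The step you single out as the main obstacle is in fact immediate. The certified interval $[L_m,V_m]$ is the numerical replacement of the emptiness check $K=\varnothing$, and \cref{alg:holder-main} runs that check at \emph{every} inner iteration, after the latest cut and with the current $U_t$. Hence each queried $K_t$ comes with its own $v_m$ satisfying $m_t(v_m)<U_t-19\Gamma/16$, and therefore its own ball $(1-\theta)v_m+\frac{\theta}{2\sqrt d}B_2^d\subseteq K_t$. Nothing has to survive across the phase, and the condition $U_t>U^{\mathrm{start}}-\Gamma/4$ plays no role. No fallback is needed either: when the slack is gone, the test itself ends the phase with gap contraction $5/8$.

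What is actually missing is bookkeeping at the interface with the companion.

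First, the companion's Appendix~E solves its sample problems on the lifted head--tail body, not on $K_t$, so the ball must be transferred. Shrinking the head--tail decomposition of $v_m$ by $1-\theta$ gives a ball of radius $\theta\min\{1,R_1,R_s\}/(4\sqrt d)$ in the lift, with $R_1=N^{1-1/p}$ and $R_s=N^{1/s-1/p}$. The outer radius is at most $4N$. Both remain inverse-polynomial, which is what the work bound needs.

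Second, the Gaussian cutoff is itself a failure event, so you cannot spend all of $\delta_k$ on the movement guarantee. Run Proposition~11, with sampling and solver errors $E=E_{\mathrm{sol}}=N^{-\rho}/4$, at confidence $\delta_k/2$, and charge the cutoff the other $\delta_k/2$. Otherwise the per-phase failure probability can reach $2\delta_k$, and $\sum_k 2\delta_k=\pi^2\delta/6>\delta$.

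Finally, pad phases that stop before $N_\Gamma$ transitions with their last body, so that the horizon-dependent bound covers the prefix. State the guarantee as $\ell_s$-movement, with $s=\min\{q,2\}$ rather than $\ell_q$, of the computed, exactly feasible centers. The coupling is evaluated at these centers, so no extra cut error enters.
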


\begin{proof}
Normalize $R=L=1$ by replacing $f$ with $f(R\,\cdot)/(LR^\kappa)$.
Fix a phase and write $N=N_\Gamma$, $\rho=\rho_{p,q}$.
Apply \citet[Proposition~11]{martinezRubioGuzman2026stable} with their
$(q,T)=(s,N)$, sampling and solver errors
$E=E_{\mathrm{sol}}=N^{-\rho}/4$, and failure probability $\delta_k/2$.
Use exactly their sample count (17), objective tolerance (32), and
feasible empirical average. These give exact feasibility and, by their
path-error estimate (33) and Theorem~9, the stated movement bound.
Their conditional concentration argument already covers adaptive bodies
and fresh samples; shorter phases can be padded by their last body.
Our coupling is evaluated at the computed feasible center itself, so
no additional cut error is needed.

We check only the geometric input to the companion's Appendix~E.
The preceding proof supplies an interior ball of radius
$\theta/(2\sqrt d)$ in each queried $K_t$, where
$\theta=\min\{1/2,\Gamma/[32(1+B)]\}$ and the stored cut normals
satisfy $B\le2\max A$. Shrinking the head-tail decomposition of $v_m$
by $1-\theta$, exactly as there, gives an interior ball in its lift
of radius $\theta\min\{1,R_1,R_s\}/(4\sqrt d)$, with
$R_1=N^{1-1/p}$ and $R_s=N^{1/s-1/p}$; an outer radius is at most
$4N$. Norm constraints and stored cuts supply the same separation
oracles without queries to $f$. Hence the certified feasible solves
of Appendix~E apply to our aggregate cuts. The fixed powers are
permitted directly in our model, without rational-exponent approximation.

Also use the Gaussian cutoff in Appendix~E with planned draw budget
$(N+1)M$, where $M$ is the sample count above, and phase failure
parameter $\delta_k$. It costs probability at most $\delta_k/2$ and
bounds the work of every sample solve. Since $\Gamma>\eps/2$ in active
phases, our planned horizons and $A_\Gamma$ make all inverse radii,
inverse tolerances, and sample counts polynomial in the parameters of
\cref{sec:implementation}. The query guard bounds the number of stored
cuts and center calls by $J$, and phase contraction bounds the number
of phases. Model and tangent solves use the same certified procedure
on $\Q$, with the logarithmic affine-term dependence established above.
Thus work is bounded on every run, and
$\sum_{k\ge0}\delta_k\le\delta$ gives simultaneous success.
Rescaling restores $R,L$.
\end{proof}

\section{Phase tuning, initialization, and final assembly}

\begin{lemma}[Quadratic phase and fixed point]
\label{lem:quadratic-fixed-point}
Consider $N$ nonterminal transitions in one phase with fixed
$A,\Gamma>0$, nonempty bodies $K_0,\ldots,K_N\subseteq RB_p^d$, and
$U_0-U_N<\Gamma/4$. Suppose the selector satisfies
$\sum_{t=0}^{N-1}\norm{z_{t+1}-z_t}_s
=\widetilde O(RN^{1-\rho})$.
Let $G_t$ be the aggregate residual norms. Assume the cumulative
contribution of the line-search, bundle, and numerical errors is a
sufficiently small multiple of $\Gamma$, and each cut error is at most
$\Gamma/2$. Then
\begin{equation*}
  N^{1+2\rho}
  =\widetilde O_s\!\left(\frac{AR^2}{\Gamma}\right).
\end{equation*}
At the planned horizon $N=N_\Gamma$, with the parameters in \cref{eq:phase-parameters}, this is equivalent, up to
constants and logs, to \cref{eq:fixed-point-result}.
\end{lemma}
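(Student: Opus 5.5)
We follow the abstract argument of \cref{thm:master-phase}, with the Hölder descent profile replaced by the quadratic profile $\mathcal D(G)=G^2/(8A)$ from \cref{prop:numerical-implementation} (or $G^2/(4A)$ from \cref{lem:feasible-coupling}). The error terms are charged to the budget assumed in the statement.

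\emph{Descent bound.} Sum the descent inequality over the $N$ nonterminal transitions. Because upper progress has not terminated the phase, $U_0-U_N<\Gamma/4$, so
\begin{equation*}
  \sum_{t<N}\frac{G_t^2}{8A}\le U_0-U_N+\sum_{t<N}C_s(\eta_t+\tau_t+\nu_t)\le\frac{\Gamma}{4}+\frac{\Gamma}{16},
\end{equation*}
using the assumption that the cumulative errors are a small multiple of $\Gamma$. Hence $\sum_{t<N}G_t^2\lesssim A\Gamma$.

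\emph{Movement bound.} Each cut error is at most $\Gamma/2$, every body $K_{t+1}$ is nonempty, and $\mathcal D\ge0$. So \cref{lem:inverse-gradient-norm}, applied in the movement norm $\ell_s$ with cut normals measured in $\ell_{s^\ast}$, gives $\norm{z_{t+1}-z_t}_s\ge\Gamma/(2G_t)$. Here $G_t>0$, since a zero residual yields an empty body and hence a terminal step. Summing and invoking the selector bound gives $\Gamma\sum_{t<N}1/G_t\le 2R\Lambda(d,N)N^{1-\rho}$.

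\emph{Combination.} Apply Hölder with exponents $3$ and $3/2$ to $N=\sum_t (G_t^2)^{1/3}(G_t^{-1})^{2/3}$:
\begin{equation*}
  N\le\Bigl(\sum_{t<N}G_t^2\Bigr)^{1/3}\Bigl(\sum_{t<N}G_t^{-1}\Bigr)^{2/3}.
\end{equation*}
Cubing and inserting the two bounds gives
\begin{equation*}
  N^3\lesssim A\Gamma\cdot\frac{R^2\Lambda^2N^{2-2\rho}}{\Gamma^2},
\end{equation*}
that is, $N^{1+2\rho}\lesssim AR^2\Lambda^2/\Gamma$. This is the $m_\kappa=2$ specialization of the master-phase computation.

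\emph{Equivalence at $N=N_\Gamma$.} Substitute $A_\Gamma\asymp L^{2/\kappa}(N/\Gamma)^{(2-\kappa)/\kappa}$ from \cref{eq:phase-parameters} to get
\begin{equation*}
  N^{1+2\rho-(2-\kappa)/\kappa}\lesssim L^{2/\kappa}R^2\Gamma^{-1-(2-\kappa)/\kappa}=L^{2/\kappa}R^2\Gamma^{-2/\kappa}.
\end{equation*}
Raise both sides to the power $\kappa/2$. The exponent of $N$ becomes $\tfrac{\kappa}{2}(1+2\rho)-\tfrac{2-\kappa}{2}=\kappa-1+\kappa\rho$, and the right side becomes $LR^\kappa/\Gamma$. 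This is \cref{eq:fixed-point-result}. Every step is reversible up to constants and the polylogarithm $\Lambda^\kappa$, so the two bounds are equivalent in that sense.

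\emph{Main obstacle.} The bookkeeping of errors is the delicate part. With $\tau_\Gamma\asymp\eta_\Gamma\asymp\Gamma/N_\Gamma$ and $\nu\le\tau$, the total error over at most $N_\Gamma$ transitions is $O(\Gamma)$. The implied constants must be chosen small enough that the sum stays below $\Gamma/16$ and each per-step cut error stays below $\Gamma/2$. A further subtlety is that the polylog $\Lambda$ is evaluated at $N$ but the selector is tuned to the horizon $N_\Gamma$. This is harmless because $N\le N_\Gamma+1$ and $\Lambda$ is nondecreasing.
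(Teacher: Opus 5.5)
Your proposal is correct and follows the paper's proof essentially line by line. You sum the descent inequality to get $\sum_t G_t^2\lesssim A\Gamma$, combine the inverse-normal-norm cut depth with the selector movement bound to control $\Gamma\sum_t 1/G_t$, apply H\"older with exponents $3$ and $3/2$, and then substitute $A_\Gamma$ and raise to the power $\kappa/2$. Your closing remark about evaluating $\Lambda$ at $N$ versus $N_\Gamma$ is not needed: the movement bound over the $N$ transitions is a hypothesis of the lemma, and the lemma is only invoked at $N=N_\Gamma$.
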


\begin{proof}
Before upper progress, sum \cref{eq:feasible-descent} to obtain
$\sum_{t<N}G_t^2\le CA\Gamma$. The weaker constants in \cref{prop:numerical-implementation} give the same estimate.  The cut depth from
\cref{eq:feasible-cut}, after absorbing its error, and the unpacked movement
bound
$\sum_{t<N}\norm{z_{t+1}-z_t}_s
\le CR\Lambda(d,N)N^{1-\rho}$ give
\begin{equation*}
  \frac\Gamma2\sum_{t<N}\frac1{G_t}
  \le R\Lambda(d,N)N^{1-\rho}.
\end{equation*}
H\"older with exponents $3$ and $3/2$ gives
\begin{equation*}
  N
  \le
  \left(\sum_{t<N}G_t^2\right)^{1/3}
  \left(\sum_{t<N}\frac1{G_t}\right)^{2/3}.
\end{equation*}
Substitution proves the quadratic phase inequality.

To test the planned horizon, set $N=N_\Gamma$ and use
$A\asymp L^{2/\kappa}(N/\Gamma)^{(2-\kappa)/\kappa}$.  The phase inequality
becomes
\begin{equation*}
  N^{1+2\rho-(2-\kappa)/\kappa}
  \lesssim
  L^{2/\kappa}R^2\Gamma^{-2/\kappa}.
\end{equation*}
Raising to $\kappa/2$ yields
\begin{equation*}
  N^{\kappa-1+\kappa\rho}
  \lesssim\frac{LR^\kappa}{\Gamma},
\end{equation*}
with the movement logarithms restored as in
\cref{eq:fixed-point-result}.
\end{proof}

To specify the horizon in \cref{alg:holder-main}, fix
$\Lambda(d,N)=1+\log(2d+2)\log(2N)$.
The selector estimates in \citet{martinezRubioGuzman2026stable}, including their
feasible approximations on the success event, give movement at most
$C_{p,s}R\Lambda(d,N)N^{1-\rho}$ over $N$ transitions.
Fix the constants in \cref{eq:phase-parameters} as in the proof below,
and a constant $C_{\kappa,p,s}$ large enough for the preceding phase bound.
The computable horizon rule is
\begin{equation}\label{eq:planned-phase-horizon}
  N_\Gamma=\min\left\{2^j:j\in\mathbb Z_{\ge0},\quad
    (2^j)^{\kappa-1+\kappa\rho}
    >C_{\kappa,p,s}\frac{LR^\kappa}{\Gamma}
      \Lambda(d,2^j)^\kappa\right\}.
\end{equation}
Thus $N_\Gamma$ counts transitions, and a phase uses at most
$N_\Gamma+1$ selector points.

\begin{lemma}[Affine-invariant initialization]
\label{lem:initialization}
One certified envelope call at $c=0$ with
\begin{equation*}
  A_0\asymp LR^{\kappa-2},
  \qquad
  \tau_0\asymp LR^\kappa
\end{equation*}
produces a feasible upper witness and a valid affine lower cut whose certified
gap over $\Q$ is $O_{\kappa,s}(LR^\kappa)$, independently of any affine term
in $f$.
\end{lemma}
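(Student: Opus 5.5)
The plan is to run a single certified envelope evaluation (\cref{def:envelope-certificate}) at the center $c=0$. We then read off both the upper witness and the lower cut from its certificate, and bound the gap using only the prox coefficient $A_0$, the radius $R$ and the tolerance $\tau_0$. No quantity involving $\nabla f$ should enter the bound.

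\textbf{Step 1: the bundle solve is admissible and cheap.} Since $p<s=\min\{q,2\}$, we have $\Q=RB_p^d\subseteq RB_s^d$. The smoothness assumption \cref{eq:holder-smoothness} in $\ell_q$ transfers to $\ell_s$:
\begin{itemize}
  \item if $q\le 2$, then $s=q$ and nothing changes;
  \item if $q\ge2$, the norm comparison of the regime assembly gives $(L,\kappa-1)$-H\"older continuity in $\ell_2$.
\end{itemize}
\cref{lem:quadraticization} then gives the quadratic upper model with $H_\omega$ measured in $\ell_s$. Take $\omega=\tau_0/8\asymp LR^\kappa$. Plugging into \cref{eq:H-omega} gives
\begin{equation*}
  H_\omega\asymp L^{2/\kappa}(LR^\kappa)^{-(2-\kappa)/\kappa}=LR^{\kappa-2}.
\end{equation*}
Choosing $A_0\ge C_sH_\omega$, still with $A_0\asymp LR^{\kappa-2}$, gives $H_\omega/\mu=O_s(1)$. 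Also $H_\omega R^2/\tau_0=O(1)$. \cref{prop:bundle-solve} (or \cref{prop:numerical-implementation} for the certified numerical version) then returns a certificate $(x,y,\model)$ after $O_{\kappa,s}(1)$ first-order calls. Here $x,y\in\Q$, $\model\le f$, and \cref{eq:bundle-lower-point,eq:bundle-envelope-gap} hold.

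\textbf{Step 2: the witness and the cut.} The feasible witness is $y$, with upper certificate $U=f(y)$. The cut is the aggregate cut $a(v)=\model(x)+\ip{g}{v-x}$ with $g=A_0J_s(-x)$. Its validity $a\le\model\le f$ on $\Q$ follows exactly as in the first paragraph of the proof of \cref{lem:feasible-coupling}. That argument uses only optimality of $x$ in the model prox problem, not the line search.

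\textbf{Step 3: the gap bound.} We have $\norm{g}_{s^\ast}=A_0\norm{x}_s\le A_0R$ and $\norm{v-x}_s\le 2R$ for $v\in\Q$, so
\begin{equation*}
  \min_{v\in\Q}a(v)\ge \model(x)-2A_0R^2 .
\end{equation*}
From \cref{eq:bundle-envelope-gap} with $c=0$,
\begin{equation*}
  f(y)\le f(y)+\tfrac{A_0}{2}\norm{y}_s^2\le \model(x)+\tfrac{A_0}{2}\norm{x}_s^2+\tau_0\le \model(x)+\tfrac{A_0}{2}R^2+\tau_0 .
\end{equation*}
Subtracting the two bounds, the certified gap is
\begin{equation*}
  U-\min_\Q a\le \tfrac52A_0R^2+\tau_0=O_{\kappa,s}(LR^\kappa).
\end{equation*}
The terms $\model(x)$, which carry all affine information about $f$, cancel. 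Adding an affine function to $f$ changes neither $H_\omega$ nor the bound, so the gap is independent of any affine term in $f$. In the numerical version, \cref{eq:numerical-cut-comparison} shifts the cut down by at most $\zeta+4RE_g=O(\nu)$, which is absorbed by taking $\nu\le\tau_0$.

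\textbf{Main obstacle.} Step 3 is routine. The point needing care is Step 1: the smoothness must be transferred to the envelope norm $\ell_s$ (via the norm comparison when $q\ge2$), and the containment $\Q\subseteq RB_s^d$ must hold so that both the bundle complexity and the diameter bound $2R$ use the same norm. Once $A_0\asymp H_{\tau_0/8}\asymp LR^{\kappa-2}$ is fixed consistently, the rest is immediate.
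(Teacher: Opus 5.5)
Your proposal is correct and follows the paper's proof essentially line by line. The paper likewise takes the certificate at $c=0$, bounds $f(y_0)-\model_0(x_0)\le A_0R^2/2+\tau_0$ and $\norm{g_0}_{s^\ast}\le A_0R$ to obtain the gap $\frac52A_0R^2+\tau_0$, and notes that the call bound is affine-invariant and that the numerical buffer adds only $O(LR^\kappa)$. Your Steps 1 and 2 just spell out what the paper leaves implicit: $H_{\tau_0/8}\asymp LR^{\kappa-2}$, the transfer of smoothness to $\ell_s$, $\Q\subseteq RB_s^d$, and cut validity via the prox optimality condition.
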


\begin{proof}
Let $(x_0,y_0,\model_0)$ be the bundle certificate and
$g_0=A_0J_s(-x_0)$.  The bundle gap and $\norm{x_0}_s\le R$ give
\begin{equation*}
  f(y_0)-\model_0(x_0)
  \le\frac{A_0R^2}{2}+\tau_0.
\end{equation*}
Moreover $\norm{g_0}_{s^\ast}\le A_0R$.  The aggregate cut
$a_0(v)=\model_0(x_0)+\ip{g_0}{v-x_0}$ therefore satisfies
\begin{equation*}
  f(y_0)-\min_{v\in\Q}a_0(v)
  \le\frac52A_0R^2+\tau_0
  =O(LR^\kappa).
\end{equation*}
The bound on the initial tangent gap in \cref{prop:bundle-solve}
is independent of an affine term in $f$, so the first-order call bound
is affine-invariant as well. With numerical solves, the buffered cut and a lower-bound
tolerance $O(LR^\kappa)$ add only $O(LR^\kappa)$ to this initial gap.
\end{proof}

\begin{proof}\linkofproof{thm:feasible-new-branch}
For randomized selectors, assign phase $k\ge0$ failure probability
$\delta_k=\delta/[2(k+1)^2]$, using
\cref{prop:selector-implementation}, including its Gaussian cutoff.
Condition on simultaneous selector success. Fresh samples make each
phase guarantee valid conditionally on the past, so this event has
probability at least $1-\sum_{k\ge0}\delta_k\ge1-\delta$.
Initialize with \cref{lem:initialization}.  At a phase of scale $\Gamma$, set
$\tau_\Gamma,\eta_\Gamma$ to a sufficiently small constant times
$\Gamma/N_\Gamma$ and choose $A_\Gamma$ as in
\cref{eq:phase-parameters}.  Then the cumulative errors in
\cref{eq:feasible-descent,eq:feasible-cut} are small enough for
\cref{lem:quadratic-fixed-point}.  Choose $N_\Gamma$ by
\cref{eq:planned-phase-horizon}. If a phase contained $N_\Gamma$ nonterminal
transitions, \cref{lem:quadratic-fixed-point} would give the opposite
inequality, a contradiction.  Thus the phase terminates earlier; one final
empty-body step changes the count by at most one.

Take $\omega=\tau_\Gamma/8$ and a sufficiently large
$A_\Gamma\asymp_{\kappa,s}H_\omega$. By
\cref{prop:bundle-solve,prop:numerical-implementation}, every envelope
call is logarithmic in $2+AR^2/\tau_\Gamma$.
For numerical solves, take
$\nu_\Gamma=\tfrac12\min\{\tau_\Gamma,A_\Gamma R^2\}$ and use the
early lower-bound test in \cref{prop:numerical-implementation};
its contraction factor $5/8$ is also at most $7/8$.  The line search is logarithmic as well. Put
$\alpha=\kappa-1+\kappa\rho>0$. To reach error $\eps$, every active
phase has $\Gamma>\eps/2$. Reading the gaps backwards from the final
active phase bounds the oracle costs by a geometric series with ratio
$(7/8)^{1/\alpha}$, up to logarithms. Including initialization, choose
the deterministic first-order budget
\begin{equation}\label{eq:oracle-budget}
  \begin{aligned}
  J(\eps,\delta)&=\left\lceil C_{\kappa,p,q}
    \left(1+(LR^\kappa/\eps)^{1/\alpha}\right)
    \left[1+\log(2d)+\log(2+LR^\kappa/\eps)
      +\log(2/\delta)\right]^8\right\rceil,\\
  \alpha&=\kappa-1+\kappa\rho.
  \end{aligned}
\end{equation}
Here the fixed constant covers all the preceding bounds. The logarithmic
power $8$ suffices: $\rho>1/2$ gives $\kappa/\alpha\le2$, so the horizon
rule costs at most four logarithmic powers, the inner solves and line
search add two, and geometric phase summation preserves this order.
In particular, $J(\eps,\delta)=\widetilde O_{\kappa,p,q}
(1+(LR^\kappa/\eps)^{1/\alpha})$.
On simultaneous success, the certified gap reaches $\eps$
within this budget, and each phase uses at most $N_\Gamma+1$ selector
calls. On every run, the guards in \cref{alg:holder-main} enforce these
budgets and retain a feasible output.
Only within-phase edges are charged to movement; restarting the
selector introduces no charged edge between phases.
For a prescribed first-order budget $T\ge1$, set
\begin{equation}\label{eq:accuracy-from-budget}
  \eps_T=\min\left\{LR^\kappa 2^{-j}:j\in\mathbb Z_{\ge0},\quad
       J(LR^\kappa 2^{-j},\delta)\le T\right\},
  \qquad J=J(\eps_T,\delta).
\end{equation}
If this set is nonempty, run \cref{alg:holder-main} with accuracy $\eps_T$.
Minimality gives $J(\eps_T/2,\delta)>T$, while
$J(\eps_T,\delta)\le T$ bounds $\log(2+LR^\kappa/\eps_T)$ by
$O_{\kappa,p,q}(\log(2+T))$. Hence
$\eps_T=\widetilde O_{\kappa,p,q}(LR^\kappa/T^\alpha)$.
If the set is empty, use one query at $0$ and minimize its tangent over
$\Q$. This gives error at most $LR^\kappa/\kappa$; since then
$T<J(LR^\kappa,\delta)$, the same rate holds after increasing its
logarithmic factor.
Every retained upper witness is feasible and matches
its upper certificate.
\end{proof}

\begin{proof}\linkofproof{thm:holder-main}
If $p<q\le2$, use \cref{thm:feasible-new-branch} with $s=q$.  If
$p<2\le q$, use the same theorem with $s=2$ and the $p/2$ movement selector.
These two cases cover $p<\min\{q,2\}$ and give
\cref{eq:holder-main-rate}.
For $p=q\le2$, the classical universal accelerated method
\citep{nesterov2015universal} in the $p$-geometry gives error
$\widetilde O_{\kappa,p}(LR^\kappa/T^{3\kappa/2-1})$, including the
logarithmic loss at $p=q=1$. For this endpoint, take $r=1+1/\log(2d+2)$ and the prox function
$u\mapsto e^2\norm{u}_r^2/[2(r-1)]$, which is $1$-strongly convex
in $\ell_1$ and has range $O(R^2\log(2d))$ on $\Q$. Since
$\kappa(1+1/p-(1/p-1/2)_+)-1=3\kappa/2-1$ for $p\le2$,
this is also \cref{eq:holder-main-rate}.
\end{proof}

\clearpage
\begingroup
\sloppy
\printbibliography
\endgroup

\end{document}